\newcommand\CC{\mathbb{C}}
\newcommand\FF{\mathbb{F}}
\newcommand\PP{\mathbb{P}}
\newcommand\Pone{{\PP^1}}
\newcommand\Ptwo{{\PP^2}}
\newcommand\ZZ{\mathbb{Z}}
\newcommand\QQ{\mathbb{Q}}
\newcommand\RR{\mathbb{R}}
\newcommand\GG{\mathbb{G}}
\newcommand\Ga{\GG_\mathrm{a}}
\newcommand\Gm{\GG_\mathrm{m}}
\newcommand\kbar{{\overline{k}}}
\renewcommand\O{\mathcal{O}}
\newcommand{\Aone}{{\mathbf A}_1}
\newcommand{\Atwo}{{\mathbf A}_2}
\newcommand{\Athree}{{\mathbf A}_3}
\newcommand{\Afour}{{\mathbf A}_4}
\newcommand{\Dfour}{{\mathbf D}_4}
\newcommand{\Dfive}{{\mathbf D}_5}
\newcommand{\Esix}{{\mathbf E}_6}
\newcommand{\Eseven}{{\mathbf E}_7}
\newcommand{\Eeight}{{\mathbf E}_8}
\newcommand{\tS}{{\widetilde S}}
\newcommand\inj{\hookrightarrow}
\newcommand\xx{\mathbf{x}}
\DeclareMathOperator{\rk}{rk}
\DeclareMathOperator{\Pic}{Pic}
\DeclareMathOperator\Aut{Aut}
\DeclareMathOperator\Bl{Bl}
\DeclareMathOperator\PGL{PGL}
\DeclareMathOperator\Gal{Gal}
\newcommand\rto{\dashrightarrow}
\newcommand\dpbox[2]{#2}
\newcommand\ecbox[2]{*+[F]{#2}}
\newtheorem*{theorem*}{Theorem}
\newtheorem{lemma}{Lemma}
\begin{document}

\title[Del Pezzo surfaces that are equivariant compactifications]{Singular del
  Pezzo surfaces that are equivariant compactifications}

\author{Ulrich Derenthal}

\address{Mathematisches Institut, Albert-Ludwigs-Universit\"at Freiburg,
  Eckerstr. 1, 79104 Freiburg, Germany}

\email{ulrich.derenthal@math.uni-freiburg.de}

\author{Daniel Loughran}

\address{Department of Mathematics, University Walk, Bristol, UK, BS8 1TW}

\email{daniel.loughran@bristol.ac.uk}

\begin{abstract}
  We determine which singular del Pezzo surfaces are equivariant
  compactifications of $\Ga^2$, to assist with proofs of Manin's conjecture
  for such surfaces. Additionally, we give an example of a singular quartic
  del Pezzo surface that is an equivariant compactification of $\Ga \rtimes
  \Gm$.
\end{abstract}

\subjclass[2000]{14L30 (14J26, 11D45)}

%14L30: Algebraic geometry->Algebraic groups->Group actions on varieties or
%schemes
%14J26: Algebraic geometry->Surfaces and higher-dimensional
%varieties->Rational and ruled surfaces
%11D45: Number theory->Diophantine equations->Counting solutions of
%Diophantine equations

\maketitle

\section{Introduction}

Let $X \subset \PP^n$ be a projective algebraic variety defined over
the field $\QQ$ of rational numbers. If $X$ contains infinitely many
rational points, one is interested in the asymptotic behaviour of
the number of rational points of bounded height. More precisely, for
a point $\xx \in X(\QQ)$ given by primitive integral coordinates
$(x_0, \dots, x_n)$, the \emph{height} is defined as $H(\xx) =
\max\{|x_0|, \dots, |x_n|\}$. As rational points may
\emph{accumulate} on closed subvarieties of $X$, we are interested
in the counting function
\[N_U(B) = \#\{\xx \in U(\QQ) \mid H(\xx) \le B\}\] for suitable
open subsets $U$ of $X$.

A conjecture of Manin \cite{MR89m:11060} predicts the asymptotic
behaviour of $N_U(B)$ precisely for a large class of varieties. In
recent years, Manin's conjecture has received attention especially
in dimension $2$, where it is expected to hold for (possibly
singular) del Pezzo surfaces.

Recall that del Pezzo surfaces are classically defined as
non-singular projective surfaces whose anticanonical class is ample;
in order to distinguish them from the objects defined next, we will
call them \emph{ordinary del Pezzo surfaces}.  A \emph{singular del
Pezzo surface} is a singular projective normal surface with only
$\mathbf{ADE}$-singularities, and whose anticanonical class is
ample. A \emph{generalised del Pezzo surface} is either an ordinary
del Pezzo surface, or a minimal desingularisation of a singular del
Pezzo surface.

Most proofs of Manin's conjecture fall into two cases:
\begin{itemize}
\item For varieties that are \emph{equivariant compactifications} of certain
  algebraic groups (see Section~\ref{sec:preliminary} for details), one
  may apply techniques of \emph{harmonic analysis on adelic groups}. In
  particular, this has led to the proof of Manin's conjecture for all toric
  varieties \cite{MR1620682} and equivariant compactifications of vector
  spaces \cite{MR1906155}.
\item Without using such a structure, Manin's conjecture has been proved in
  some cases via \emph{universal torsors}. This goes back to Salberger
  \cite{MR1679841}. Here, one parameterises the rational points on $X$ by
  integral points on certain higher-dimensional varieties, called universal
  torsors, which turn out to be easier to count.
\end{itemize}

To identify del Pezzo surfaces for which proving Manin's conjecture
using universal torsors is worthwhile, one should know in advance
which ones are covered by more general results such as
\cite{MR1620682} and \cite{MR1906155}.

Toric del Pezzo surfaces (i.e., del Pezzo surfaces which are
equivariant compactifications of the two-dimensional torus $\Gm^2$)
have been classified: ordinary del Pezzo surfaces are toric
precisely in degree $\ge 6$. In lower degrees, there are some toric
singular del Pezzo surfaces, for example a cubic surface with
$3\Atwo$ singularities, for which Manin's conjecture was proved not
only by the general results of \cite{MR1620682}, \cite{MR1679841},
but also by more direct methods in \cite{MR2000b:11075},
\cite{MR2000b:11074}, \cite{MR2000f:11080}.  The classification of
all toric singular del Pezzo surfaces is known and can be found in
\cite{math.AG/0604194}, for example.

The purpose of this note is to identify all del Pezzo surfaces that
are $\Ga^2$-varieties (i.e., equivariant compactifications of the
two-dimensional additive group $\Ga^2$), so that Manin's conjecture
is known for them by \cite{MR1906155}.

\begin{theorem*}
  Let $S$ be a (possibly singular or generalised) del Pezzo surface of degree
  $d$, defined over a field $k$ of characteristic $0$. Then $S$ is an
  equivariant compactification of $\Ga^2$ over $k$ if and only if
  one of the following holds:
  \begin{itemize}
  \item $S$ has a non-singular $k$-rational point and is a form of $\Ptwo$,
    $\Pone \times \Pone$, the Hirzebruch surface $\FF_2$ or the corresponding
    singular del Pezzo surface,
  \item $S$ is a form of $\Bl_1\Ptwo$ or $\Bl_2\Ptwo$,
  \item $d = 7$ and $S$ is of type $\Aone$,
  \item $d = 6$ and $S$ is of type $\Aone$ (with 3 lines), $2\Aone$, $\Atwo$
    or $\Atwo+\Aone$,
  \item $d = 5$ and $S$ is of type $\Athree$ or $\Afour$,
  \item $d = 4$ and $S$ is of type $\Dfive$.
  \end{itemize}
\end{theorem*}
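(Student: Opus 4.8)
The plan is to reduce everything to a divisorial condition on a smooth surface, enumerate the possibilities using the classification of generalised del Pezzo surfaces by degree and singularity type, and then exhibit explicit actions on the surfaces that survive.

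Two reductions come first. Since the minimal desingularisation $\tS \to S$ is functorial for automorphisms, and since the open $\Ga^2$-orbit of any equivariant compactification, being isomorphic to $\mathbb{A}^2$, meets the smooth locus and hence avoids the exceptional $(-2)$-curves, $S$ is an equivariant compactification of $\Ga^2$ if and only if $\tS$ is; so I may assume $S=\tS$ is a smooth generalised del Pezzo surface. Secondly, an equivariant compactification of $\Ga^2$ over $k$ is the same thing as one over $\kbar$ with a Galois-stable open orbit whose identity point is $k$-rational; in all of the listed cases except the forms of $\Ptwo$, $\Pone\times\Pone$ and $\FF_2$ the boundary divisor turns out to be intrinsic to $S$ (it carries the distinguished anticanonical configuration found below), hence automatically Galois-stable, so the classification over $k$ follows formally from the one over $\kbar$; only in those three high-degree families does one need, and trivially suffice with, the existence of a non-singular $k$-rational point (this is what excludes, for instance, a non-trivial Severi--Brauer surface). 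So I would work over $\kbar$ from now on.

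Over $\kbar$, let $U\cong\mathbb{A}^2$ denote the open orbit — the stabiliser of a point being a finite, hence trivial, subgroup of the unipotent group $\Ga^2$ — and $D=S\setminus U=\bigcup_i D_i$ the boundary. The necessary conditions I would exploit are: (i) $D$ is a connected curve of pure codimension one, since $U$ is affine; (ii) the localisation sequence for Picard groups and units, combined with $\mathcal{O}^*(\mathbb{A}^2)=\kbar^*$ and $\Pic(\mathbb{A}^2)=0$, forces the classes $[D_i]$ to form a $\mathbb{Z}$-basis of $\Pic S$, so that $D$ has exactly $\rk\Pic S=10-d$ irreducible components; (iii) a standard computation with the translation-invariant top-form on $\Ga^2$ (Hassett--Tschinkel) shows $-K_S$ is linearly equivalent to an effective divisor supported on $D$ with every multiplicity at least one, say $-K_S\sim\sum_i a_iD_i$ with $a_i\ge1$. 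Intersecting with the nef and big class $-K_S$ yields $d=\sum_i a_i\,(-K_S\cdot D_i)$ with each $-K_S\cdot D_i\ge0$, a strong constraint. Feeding this into the classification of generalised del Pezzo surfaces of degree $d\le8$ by their ADE type, together with the standard tables of $(-1)$- and $(-2)$-curves (and rulings, in degree $8$) on each such surface, I would enumerate the finitely many candidate configurations $D$; in low degree this is a finite bookkeeping exercise in the lattice $\Pic S$ and its Weyl-group combinatorics.

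What remains is to decide, for each surviving pair $(S,D)$, whether $S\setminus D$ really is isomorphic to $\mathbb{A}^2$: conditions (i)--(iii) only produce a smooth rational affine surface with trivial Picard group and no non-constant units, which need not be the plane — for instance the complement of a pentagon of lines on the quintic del Pezzo surface, or of a suitable sub-configuration of lines on the hexagonal sextic, passes all the numerical tests but is not $\mathbb{A}^2$. I would eliminate such configurations either by Miyanishi's characterisation of the affine plane (a smooth affine surface of negative logarithmic Kodaira dimension with trivial Picard group, trivial units and the appropriate simple-connectedness is $\mathbb{A}^2$) or by writing down the complement explicitly; what is left should be exactly the list in the statement. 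For the converse — and as the most efficient way to settle the $\mathbb{A}^2$-recognition at the same time — I would construct the actions directly: take the standard action $(a,b)\cdot[x_0:x_1:x_2]=[x_0:x_1+ax_0:x_2+bx_0]$ on $\Ptwo$, whose boundary line is fixed pointwise, together with the analogous product and twisted actions on $\Pone\times\Pone$, $\FF_2$ and the quadric cone, and then successively blow up $\Ga^2$-fixed points lying on the current boundary, in the infinitely-near configurations dictated by the desired degree and singularity type; the action lifts at each stage, the new exceptional curve enlarges the boundary, and one records that this process yields precisely the surfaces of the theorem. The main obstacle is the necessity direction: carrying out the case analysis over all ADE types in degrees $4$--$7$ cleanly, and in particular performing, case by case, the separation of ``$U$ is an affine surface with trivial Picard group and units'' from ``$U\cong\mathbb{A}^2$'', which is exactly where the quintic-with-pentagon and the toric sextics get excluded.
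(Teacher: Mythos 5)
Your necessary conditions (boundary components freely generating $\Pic$, the anticanonical multiplicity bound, intersection with $-K_S$) reproduce the paper's Lemma~\ref{lem:negative_curves} and do correctly cut the candidates down to a short list; your construction of the positive examples by lifting actions through blow-ups of fixed points is also how the paper proceeds. The genuine gap is in how you propose to finish the necessity direction. You reduce the remaining decision to ``is $S\setminus D$ isomorphic to $\mathbb{A}^2$?'', but this is strictly weaker than being an equivariant compactification of $\Ga^2$: one needs the entire two-dimensional group of translations of $\mathbb{A}^2$ to extend to automorphisms of $S$, not merely an isomorphism of the open piece with the plane. The borderline cases that actually have to be excluded --- type $\Dfour$ in degree $4$ and type $\Esix$ in degree $3$ (hence also $\Eseven$ and $\Eeight$) --- pass every test on your list, including the last one. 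For the $\Esix$ cubic $x_1x_0^2+x_0x_3^2+x_2^3=0$, the complement of the unique line is the graph $x_1=-(x_3^2+x_2^3)$, visibly isomorphic to $\mathbb{A}^2$; on the desingularisation the boundary consists of the seven negative curves, which freely generate $\Pic(\tS_\kbar)\cong\ZZ^7$, and $-K_S$ is three times the line. Exactly the same numerical picture occurs for the $\Dfive$ quartic, which \emph{is} on the list. So Miyanishi-type recognition of the plane cannot separate the examples from the non-examples, and your enumeration would terminate with the wrong answer in degrees $\le 4$.

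The missing ingredient, which is the technical heart of the paper, is the classification of $\Ga^2$-structures on $\Ptwo$ up to equivalence (Lemma~\ref{lem:P2}: only the two representations $\tau$ and $\rho$), combined with the observation that any $\Ga^2$-structure on $S$ descends through $\tS\to\Ptwo$ to one of these, and is therefore detected by testing whether the cubic linear system defining $\Ptwo\rto S$ is invariant under suitable representatives of the two equivalence classes. That explicit invariance computation is what kills $\Esix$ and $\Dfour$ and confirms $\Dfive$, $\Athree$ and the degree-$6$ type $\Aone$. A secondary, smaller issue: your claim that the classification over $k$ ``follows formally'' from the one over $\kbar$ because the boundary is Galois-stable is not a descent argument; the paper instead proves that most relevant surfaces are automatically split (Lemma~\ref{lem:always_split}) and, for forms of $\Bl_2\Ptwo$, $\Pone\times\Pone$ and the degree-$6$ types $\Aone$ and $\Atwo$, constructs $k$-rational actions fixing the relevant conjugate point sets (e.g.\ a structure on $\FF_2$ fixing a fibre pointwise so that two conjugate points on it can be blown up equivariantly).
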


Table~\ref{tab:overview} summarises the results. For all del Pezzo
surfaces for which Manin's conjecture is known (at least in one
case), we have included references to the relevant articles.

In Lemma~\ref{lem:negative_curves}, we will give a criterion that will reduce
the number of ``candidates'' of generalised del Pezzo surfaces that might be
$\Ga^2$-varieties to a short list of surfaces that are connected by blow-ups
and blow-downs as presented in Figure~\ref{fig:blow-ups}.

Using a strategy described in Section~\ref{sec:strategy}, we will
show explicitly that the surfaces of type $\Aone$ in degree $6$,
type $\Athree$ in degree $5$ and type $\Dfive$ in degree $4$ are
$\Ga^2$-varieties, while type $\Dfour$ in degree $4$ and type
$\Esix$ in degree $3$ cannot have this structure. From these
``borderline cases'', some general considerations will allow us to
complete the classification over algebraically closed fields. Over non-closed
fields, some additional work will be necessary.

In Section~\ref{sec:ga_gm}, we will give an example of a del Pezzo
surface that is neither toric nor a $\Ga^2$-variety, but an
equivariant compactification of a semidirect product $\Ga \rtimes
\Gm$. This shows that it could be worthwhile even for del Pezzo
surfaces to extend the harmonic analysis approach to Manin's
conjecture to equivariant compactifications of more general
algebraic groups than tori and vector spaces.

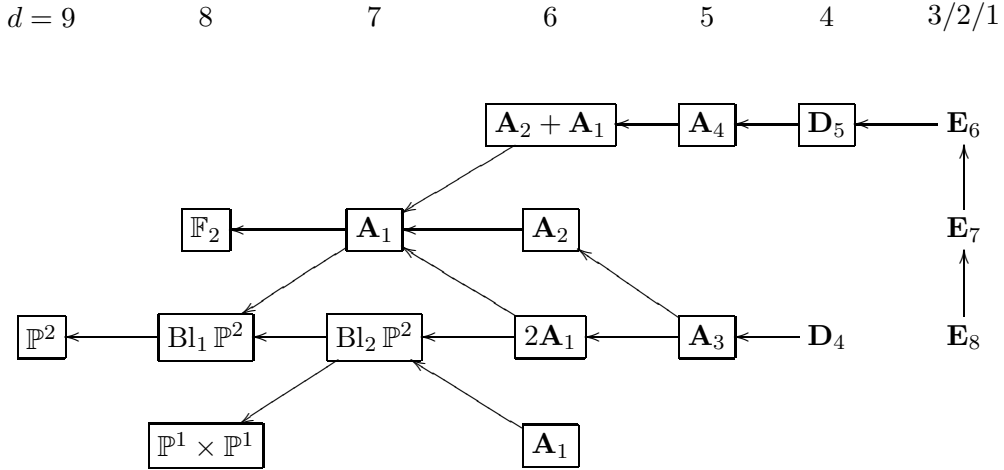
\begin{figure}[ht]
  \begin{equation*}
    \xymatrix{
      d=9 & 8 & 7 & 6 & 5 & 4 & 3/2/1\\
      & & & \ecbox{6}{\Atwo+\Aone} \ar@{->}[dl] & \ecbox{5}{\Afour} \ar@{->}[l] & \ecbox{4}{\Dfive} \ar@{->}[l] &
      \dpbox{3}{\Esix} \ar@{->}[l]\\
      & \ecbox{8}{\FF_2} & \ecbox{7}{\Aone} \ar@{->}[dl] \ar@{->}[l] &
      \ecbox{6}{\Atwo} \ar@{->}[l] & & & \dpbox{2}{\Eseven} \ar@{->}[u]\\
      \ecbox{9}{\Ptwo} & \ecbox{8}{\Bl_1\Ptwo} \ar@{->}[l] & \ecbox{7}{\Bl_2\Ptwo} \ar@{->}[l] \ar@{->}[dl]&
      \ecbox{6}{2\Aone} \ar@{->}[ul] \ar@{->}[l] & \ecbox{5}{\Athree}
      \ar@{->}[ul] \ar@{->}[l] & \dpbox{4}{\Dfour} \ar@{->}[l] & \dpbox{1}{\Eeight} \ar@{->}[u]\\
      & \ecbox{7}{\Pone \times \Pone} & & \ecbox{6}{\Aone} \ar@{->}[ul]
    }
  \end{equation*}
  \caption{Generalised del Pezzo surfaces $S$ defined over $\kbar$
    that satisfy $\#\{\text{negative curves on $S$}\} \le \rk\Pic(S)$. The
    boxed ones are equivariant compactifications of $\Ga^2$. Arrows denote
    blow-up maps.}
  \label{fig:blow-ups}
\end{figure}

\begin{table}[ht]
  \centering
  \begin{tabular}{|c||c|c||c|c||c|}
    \hline
    degree & type & lines & toric & $\Ga^2$-variety & Manin's
    conjecture\\
    \hline\hline
    9 & $\Ptwo$ & -- & yes & yes & \cite{MR1620682}, \cite{MR1906155} \\
    \hline
    8 & $\Bl_1\Ptwo$ & 1 & yes & yes & \cite{MR1620682}, \cite{MR1906155} \\
    & $\FF_2$ & -- & yes & yes & \cite{MR1620682}, \cite{MR1906155} \\
    \hline
    7 & $\Bl_2\Ptwo$ & 3 & yes & yes & \cite{MR1620682}, \cite{MR1906155} \\
    & $\Aone$ & 2 & yes & yes & \cite{MR1620682}, \cite{MR1906155} \\
    \hline
    6 & $\Bl_3\Ptwo$ & 6 & yes & -- & \cite{MR1620682} \\
    & $\Aone$ & 4 & yes & -- & \cite{MR1620682} \\
    & $\Aone$ & 3 & -- & yes & \cite{MR1906155} \\
    & $2\Aone$ & 2 & yes & yes & \cite{MR1620682}, \cite{MR1906155} \\
    & $\Atwo$ & 2 & -- & yes & \cite{MR1906155} \\
    & $\Atwo+\Aone$ & 1 & yes & yes & \cite{MR1620682}, \cite{MR1906155} \\
    \hline
    5 & $\Bl_4\Ptwo$ & 10 & -- & -- & \cite{MR1909606}, \cite{MR2099200}\\
    & $\Aone$ & 7 & -- & -- & -- \\
    & $2\Aone$ & 5 & yes & -- & \cite{MR1620682} \\
    & $\Atwo$ & 4 & -- & -- & \cite{arXiv:0710.1583}\\
    & $\Atwo+\Aone$ & 3 & yes & -- & \cite{MR1620682} \\
    & $\Athree$ & 2 & -- & yes & \cite{MR1906155} \\
    & $\Afour$ & 1 & -- & yes & \cite{MR1906155} \\
    \hline
    4 & $\Bl_5\Ptwo$ & 16 & -- & -- & \cite{arXiv:0808.1616}\\
    & $\Aone$ & 12 & -- & -- & --\\
    & $2\Aone$ & 9 & -- & -- & --\\
    & $2\Aone$ & 8 & -- & -- & \cite{arXiv:1002.0255}\\
    & $\Atwo$ & 8 & -- & -- & --\\
    & $3\Aone$ & 6 & -- & -- & --\\
    & $\Atwo+\Aone$ & 6 & -- & -- & --\\
    & $\Athree$ & 5 & -- & -- & \cite{derenthal}\\
    & $\Athree$ & 4 &-- & -- & --\\
    & $4\Aone$ & 4 & yes & -- & \cite{MR1620682}\\
    & $\Atwo+2\Aone$ & 4 & yes & -- & \cite{MR1620682} \\
    & $\Athree+\Aone$ & 3 & -- & -- & \cite{MR2520770}\\
    & $\Afour$ & 3 & -- & -- & \cite{MR2543667}\\
    & $\Dfour$ & 2 & -- & -- & \cite{MR2290499}\\
    & $\Athree + 2\Aone$ & 2 & yes & -- & \cite{MR1620682} \\
    & $\Dfive$ & 1 & -- & yes & \cite{MR1906155}, \cite{MR2320172} \\
    \hline
    3 & $\Dfive$ & 3 & -- & -- & \cite{MR2520769} \\
    & $3\Atwo$ & 3 & yes & -- & \cite{MR1620682}, \dots \\
    & $\Esix$ & 1 & -- & -- & \cite{MR2332351} \\
    & \dots & & & & \\
    \hline
    2 & $\Eseven$ & 1 & -- & -- & --\\
    & \dots & & & & \\
    \hline
    1 & $\Eeight$ & 1 & -- & -- & --\\
    & \dots & & & & \\
    \hline
  \end{tabular}
  \smallskip
  \caption{Singular del Pezzo surfaces over $\kbar$: all types of degree $\ge
    4$ and the relevant types of degree $\le 3$.}
  \label{tab:overview}
\end{table}

\medskip

\noindent\textbf{Acknowledgments:} This project was initiated during
the trimester program ``Diophantine equations'' at the Hausdorff
Research Institute for Mathematics (Bonn, Spring 2009). The authors
are grateful for the hospitality of this institution. The first
author was partially supported by grant DE~1646/1-1 of the Deutsche
Forschungsgemeinschaft, and the second author was funded by an EPSRC
student scholarship.

\section{Preliminaries}\label{sec:preliminary}

In this section, we start by recalling basic facts about the
structure and classification of del Pezzo surfaces and continue with
some elementary results on $\Ga^2$-varieties under blow-ups.
We work over a field $k$ of characteristic $0$ with algebraic closure $\kbar$.

\medskip

For $n \in \{1,2\}$, a \emph{$(-n)$-curve} on a non-singular
projective surface is a smooth rational curve defined over $\kbar$
with self-intersection number $-n$.  Over $\kbar$, every generalised
del Pezzo surface $S$ can be realised as either $\Ptwo$, $\Pone
\times \Pone$, the Hirzebruch surface $\FF_2$ or a \emph{blow-up of
$\Ptwo$ in $r \le 8$ points in almost general position}, which means
that $S$ is obtained from $\Ptwo$ by a series of $r \le 8$ maps
\begin{equation*}
  S=S_r \to S_{r-1} \to \dots \to S_1 \to S_0=\Ptwo
\end{equation*}
where each map $S_i \to S_{i-1}$ is the blow-up of a point not lying
on a $(-2)$-curve of $S_{i-1}$. The \emph{degree} of $S$ is the
self-intersection number of its anticanonical class $-K_S$; it is
$9-r$ in the case of blow-ups of $\Ptwo$ in $r \le 8$ points. A
generalised del Pezzo surface $S$ is ordinary if and only if it does
not contain $(-2)$-curves; this is true for $\Ptwo$, $\Pone \times
\Pone$ and blow-ups of $\Ptwo$ in $r \le 8$ points \emph{in general position}
(see \cite[Th\'eor\`eme~III.1]{MR579026}, for example).

In each degree, we say that two del Pezzo surfaces have the same \emph{type}
if their \emph{extended Dynkin diagrams} (the dual graphs of their
configurations of negative curves over $\kbar$) coincide. In general, there
are several isomorphism classes of del Pezzo surfaces of the same type (e.g.,
infinite families of ordinary del Pezzo surfaces in degree $\le 4$), but over
$\kbar$ in all the cases that we will be interested in, each surface is
uniquely determined by its type. In each degree, we will label the types by
the connected components of $(-2)$-curves in their extended Dynkin diagrams
(in the $\mathbf{ADE}$-notation); in many cases, this determines the type
uniquely, but sometimes, ones must additionally mention the number of
$(-1)$-curves (e.g., type $\Aone$ in degree $6$ with $3$ or $4$
$(-1)$-curves).

Classifying singular del Pezzo surfaces according to their degree,
the types of their singularities and, if necessary, their number of
lines gives the same result. See \cite{MR579026}, \cite{MR80f:14021},
\cite{MR89f:11083} or \cite{MR2227002} for further details.

A surface $S$ defined over $k$ is a (ordinary, generalised or singular) del
Pezzo surface if $S_\kbar = S \times_k \kbar$ has such a structure over the
algebraic closure $\kbar$; by definition, the type of $S$ is the type of
$S_\kbar$. We say that $S$ is a \emph{form} of $S'$ if $S_\kbar$ and
$S'_\kbar$ are isomorphic. A generalised (resp.\ singular) del Pezzo surface
defined over $k$ is called \emph{split} if it (resp.\ its minimal
desingularisation) is isomorphic over $k$ to $\Ptwo$, $\Pone \times \Pone$,
$\FF_2$ or a blow-up of $\Ptwo$ in $k$-rational points.

\medskip

If $\GG$ is a connected linear algebraic group defined over $k$,
then we say that a proper variety $V$ defined over $k$ is an
\emph{equivariant compactification of $\GG$ over $k$} or
alternatively a \emph{$\GG$-variety over $k$}, if $\GG$ acts on $V$,
with the action being defined over $k$, and there exists an open
subset $U \subset V$ which is \emph{equivariantly} isomorphic to
$\GG$ over $k$. By an equivariant morphism, we mean a morphism
commuting with the action of $\GG$. We note that any algebraic group
over $k$ which is isomorphic to $\Ga^n$ over $\kbar$, is also
isomorphic to $\Ga^n$ over $k$.

An \emph{equivalence} between $\GG$-varieties $X_1, X_2$ is a
commutative diagram
\begin{equation}\label{eq:equivalence}
  \begin{split}
    \xymatrix{
      \GG \times X_1 \ar[d] \ar[r]^{(\alpha,j)} & \GG \times X_2 \ar[d] \\
      X_1 \ar[r]^j & X_2 }
  \end{split}
\end{equation}
where $\alpha: \GG \to \GG$ is an automorphism and $j : X_1 \to X_2$
is an isomorphism.

\begin{lemma}\label{lem:P2}
  Up to equivalence, there are precisely two distinct $\Ga^2$-structures on
  $\Ptwo$ over $\kbar$.  They are given by the following representations of
  $\Ga^2$:

  \begin{equation*}
    \tau(a,b)=\left( \begin{array}{ccc}
        1 & 0 & 0 \\
        a & 1 & 0 \\
        b & 0 & 1 \end{array} \right), \quad
    \rho(a,b)=\left( \begin{array}{ccc}
        1 & 0 & 0 \\
        a & 1 & 0 \\
        b+\frac{1}{2}a^2 & a & 1 \end{array} \right).
  \end{equation*}
\end{lemma}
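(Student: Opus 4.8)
The plan is to classify all actions of $\Ga^2$ on $\Ptwo$ over $\kbar$ up to equivalence by passing to the associated representation on global sections. Since $\Ptwo = \PP(V)$ for a $3$-dimensional vector space $V = H^0(\Ptwo, \O(1))^\vee$, and since $\Ga^2$ is connected, any action of $\Ga^2$ on $\Ptwo$ is induced by a homomorphism $\Ga^2 \to \PGL_3$. Because $\Ga^2$ is simply connected and $\mathrm{SL}_3 \to \PGL_3$ is a central isogeny, this lifts (canonically, after choosing a splitting) to a homomorphism $\varphi : \Ga^2 \to \mathrm{SL}_3$, i.e.\ to a $3$-dimensional representation of $\Ga^2$ with trivial determinant. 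Two such representations induce equivalent $\Ga^2$-structures precisely when they differ by conjugation in $\mathrm{SL}_3$ together with an automorphism $\alpha \in \Aut(\Ga^2) = \mathrm{GL}_2$; so the task reduces to classifying $3$-dimensional representations of $\Ga^2$ up to this equivalence, and then checking which ones have a dense orbit (the open-orbit condition in the definition of equivariant compactification).

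The key step is the structure of representations of $\Ga^2$. A representation $\varphi : \Ga^2 \to \mathrm{GL}_3$ is the same as a pair of commuting nilpotent-plus-identity... more precisely, writing $\varphi(a,b) = \exp(aN_1 + bN_2)$ is \emph{not} valid in general, but since $\Ga^2$ is unipotent its image lies in a maximal unipotent subgroup of $\mathrm{GL}_3$, hence (after conjugation) in the strictly upper (or lower) triangular matrices. So $\varphi$ is determined by two commuting nilpotent matrices $N_1, N_2$ via $\varphi(a,b) = \exp(aN_1 + bN_2)$, with the commutativity forcing $[N_1,N_2]=0$; the determinant-one condition is automatic since $N_i$ are nilpotent. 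I would then classify commuting pairs of nilpotent $3\times 3$ matrices up to simultaneous conjugation and up to the $\mathrm{GL}_2$-action $(N_1,N_2) \mapsto (aN_1+bN_2,\ cN_1+dN_2)$. The commuting variety of nilpotent $3\times 3$ matrices has few orbits: up to the above equivalence, the relevant possibilities for the $2$-dimensional subspace $\mathfrak{n} = \langle N_1, N_2\rangle \subset \mathfrak{sl}_3$ spanned by the image are essentially (i) $\mathfrak{n}$ abelian of "regular" type, giving the action $\tau$ above — here $N_1, N_2$ are two independent lower-triangular matrices with a single nonzero entry in the first column; and (ii) $\mathfrak{n}$ containing a regular nilpotent element, which after normalisation gives $\rho$, where $aN_1+bN_2$ has $a$ on the subdiagonal and $b + \tfrac12 a^2$ coming from $\exp$. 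One checks these are the only two that act with a dense orbit on $\Ptwo$: for a dense orbit one needs the orbit map to have $2$-dimensional image and the stabiliser of a general point to be trivial, which rules out the degenerate cases (e.g.\ where $\mathfrak n$ is contained in a smaller parabolic or where the image fixes a point or line pointwise with the complement too small).

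The main obstacle I expect is organising the case analysis of commuting nilpotent pairs cleanly, so that the $\mathrm{GL}_2$-equivalence is used to reduce to exactly the two normal forms without missing a case or double-counting: one must verify both that $\tau$ and $\rho$ are genuinely inequivalent (e.g.\ the orbit structure differs — under $\rho$ the complement of the open orbit is a single line, while under $\tau$... one distinguishes them by, say, whether the invariant line is fixed pointwise or the induced action on it, or by the Jordan type of a general element $aN_1+bN_2$: regular for $\rho$, subregular for $\tau$), and that every representation giving an equivariant compactification falls into one of these two. A convenient way to finish is to note that an equivariant compactification structure on $\Ptwo$ restricts the line at infinity $\Ptwo \setminus \Ga^2$ to be a $\Ga^2$-invariant divisor $D$ of degree $1$ (a line), and the induced action on the pair $(\Ptwo, D)$ with $D$ a line is governed by how $\Ga^2$ acts on $D \cong \Pone$ and on the normal data — giving a short, clean alternative to the brute-force matrix classification, and matching the two stated normal forms.
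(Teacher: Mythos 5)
The paper offers no proof of its own here: it simply cites \cite[Proposition~3.2]{MR1731473} (Hassett--Tschinkel). Your sketch is a correct, self-contained reconstruction of essentially the argument behind that citation: Hassett--Tschinkel classify generically transitive $\Ga^n$-actions on $\PP^n$ via $(n+1)$-dimensional representations of $\Ga^n$, equivalently via local Artin algebras of length $n+1$ (for $n=2$ these are $\kbar[x]/(x^3)$, giving $\rho$, and $\kbar[x,y]/(x,y)^2$, giving $\tau$), and for $n=2$ this is exactly your classification of commuting nilpotent pairs. Two small points would tighten the write-up. First, the lift $\Ga^2\to\mathrm{SL}_3$ exists because the image in $\PGL_3$ is connected and unipotent and, in characteristic $0$, a central extension of a unipotent group by a finite group splits over it; ``simply connected'' is the analytic rather than the algebraic justification. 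Second, the case analysis does close up cleanly and is worth making explicit: after simultaneous strict triangularisation the span $\mathfrak{n}=\langle N_1,N_2\rangle$ is a $2$-dimensional abelian subspace of the Heisenberg algebra $\langle E_{21},E_{32},E_{31}\rangle$, hence must contain the centre $\langle E_{31}\rangle$; up to conjugation by diagonal matrices and the $\mathrm{GL}_2$-action the only possibilities are $\langle E_{21},E_{31}\rangle$ (giving $\tau$), $\langle E_{21}+E_{32},E_{31}\rangle$ (giving $\rho$), and $\langle E_{32},E_{31}\rangle$, and the last is eliminated because all of its orbits on $\Ptwo$ are at most one-dimensional, so there is no open orbit. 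Inequivalence of $\tau$ and $\rho$ follows as you say from the Jordan type of a general element (subregular versus regular), or equivalently from the fixed locus (a line fixed pointwise for $\tau$, a single point for $\rho$). Note also that for a faithful $\Ga^2$-action a dense orbit is automatically a free orbit, since $\Ga^2$ has no non-trivial finite subgroups in characteristic $0$, so ``dense orbit'' suffices for the equivariant-compactification condition.
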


\begin{proof}
    See \cite[Proposition~3.2]{MR1731473}.
\end{proof}

\begin{lemma}\label{lem:blow_down}
  Let $S$ be a non-singular $\Ga^2$-variety over $k$, and $E \subset S$ a
  $(-1)$-curve which is invariant under the action of the Galois group
  $\Gal(\kbar/k)$. Then there exists a $\Ga^2$-equivariant $k$-morphism that
  blows down $E$.
\end{lemma}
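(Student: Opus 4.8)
The plan is to contract $E$ over $k$ by Castelnuovo's criterion and then to descend the $\Ga^2$-action along this contraction; the \emph{descent} is the main point.

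Since $E$ is stable under $\Gal(\kbar/k)$, its ideal sheaf descends, so $E$ is defined over $k$; as the invertible sheaf $\O_E(E)$ has degree $E^2=-1$, the genus-zero curve $E$ carries a divisor of odd degree and hence is isomorphic to $\Pone_k$. Castelnuovo's contractibility theorem therefore applies over $k$ and produces a birational $k$-morphism $\pi\colon S\to S'$ of smooth projective surfaces which is the blow-up of $S'$ at a point $p\in S'(k)$ with exceptional curve $E$. Next I record that $E$ is automatically $\Ga^2$-invariant: $S$ is rational, so $\Pic(S_\kbar)=\mathrm{NS}(S_\kbar)$, and the connected group $\Ga^2$ acts trivially on it; hence every $g\in\Ga^2(\kbar)$ fixes the class of $E$, and since $E^2=-1<0$ makes $E$ the unique effective divisor in its class, $g(E_\kbar)=E_\kbar$. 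Equivalently, the action morphism $\mu\colon\Ga^2\times S\to S$ carries $\Ga^2\times E$ into $E$. If $U\subseteq S$ is the given open subset equivariantly isomorphic to $\Ga^2$, then $E\cap U$ is a $\Ga^2$-invariant closed subset of $U\cong\Ga^2$ of dimension at most $1$, hence empty ($\Ga^2$ acts transitively on itself by translation), so $U\subseteq S\setminus E$ and $\pi$ maps $U$ isomorphically onto an open subscheme $U'\subseteq S'$.

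For the main step, descending the action, flat base change shows that $\psi:=\mathrm{id}\times\pi\colon\Ga^2\times S\to\Ga^2\times S'$ is the blow-up of the smooth variety $\Ga^2\times S'$ along the smooth centre $\Ga^2\times\{p\}$; in particular $\psi$ is proper and birational, an isomorphism over $\Ga^2\times(S'\setminus\{p\})$, and $\psi_*\O=\O$. Put $\Phi:=\pi\circ\mu\colon\Ga^2\times S\to S'$. Over $(g,q)$ with $q\ne p$ the fibre of $\psi$ is a single point, while over $(g,p)$ it is $\{g\}\times E$, which $\Phi$ sends to $\pi(g\cdot E)=\pi(E)=p$ by the invariance of $E$; thus $\Phi$ is constant on every fibre of $\psi$. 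Consequently the image $W$ of $(\psi,\Phi)$ in $(\Ga^2\times S')\times S'$ is closed, and $W\to\Ga^2\times S'$ is proper, bijective and birational onto the normal variety $\Ga^2\times S'$, hence an isomorphism (Zariski's main theorem; we are in characteristic $0$). Composing its inverse with the projection to $S'$ yields a unique $k$-morphism $m\colon\Ga^2\times S'\to S'$ with $m\circ\psi=\Phi$, that is, $m(g,\pi(x))=\pi(g\cdot x)$.

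It remains to check the formal properties. The identities $m(0,-)=\mathrm{id}$ and $m(g,m(h,x))=m(gh,x)$ hold after pulling back along the surjections $\Ga^2\times S\to\Ga^2\times S'$ and $\Ga^2\times\Ga^2\times S\to\Ga^2\times\Ga^2\times S'$, since they hold for $\mu$ and $\Phi=\pi\circ\mu$ factors through $\psi$; as $S'$ is separated, they then hold on $S'$ itself. Hence $m$ is a $\Ga^2$-action over $k$; the relation $m(g,\pi(x))=\pi(g\cdot x)$ says precisely that $\pi$ is $\Ga^2$-equivariant and that $\pi|_U\colon U\xrightarrow{\sim}U'$ is equivariant, so $U'$ is equivariantly isomorphic to $\Ga^2$ and $S'$ is an equivariant compactification of $\Ga^2$ over $k$ with $\pi$ the desired equivariant blow-down of $E$. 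The only delicate point is the descent step: one must know that $\Phi$ factors through the contraction $\psi$, which is exactly where normality of $\Ga^2\times S'$ together with $\psi_*\O=\O$ — i.e.\ the rigidity lemma — is used; everything else is formal.
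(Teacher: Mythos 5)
Your proof is correct, but it takes a genuinely different route from the paper: the paper simply cites Hassett--Tschinkel \cite[Proposition~5.1]{MR1731473} for the statement over $\kbar$ and then remarks that Galois-invariance of $E$ makes the contraction morphism descend to $k$. You instead give a complete, self-contained argument carried out over $k$ from the start: you first observe that $E$ is defined over $k$ and, via the odd-degree divisor class $\O_E(E)$, that $E\cong\Pone_k$, so Castelnuovo's criterion applies over $k$; you then re-prove the descent of the $\Ga^2$-action along the contraction by the standard rigidity argument ($\mathrm{id}\times\pi$ is itself a blow-up by flat base change, $\pi\circ\mu$ is constant on its fibres because $E$ is $\Ga^2$-invariant, and the factorisation follows from properness, $\psi_*\O=\O$ and normality of the target via Zariski's main theorem), finally checking the action axioms by pulling back along the surjection $\psi$. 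What your approach buys is independence from the cited reference and a careful treatment of the arithmetic point (why $E$ is a $k$-rational $\Pone$ and why the action, not just the morphism, is defined over $k$), which the paper compresses into ``it is clear that\dots''; what the paper's approach buys is brevity, since the geometric content of your rigidity step is exactly the content of \cite[Proposition~5.1]{MR1731473}. One could streamline your last step slightly by noting that the contraction $\pi$ is canonical, so both the morphism and the transported action descend by Galois descent, but your direct argument over $k$ is perfectly sound.
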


\begin{proof}
  See \cite[Proposition~5.1]{MR1731473} for the corresponding statement over
  $\kbar$. It is clear that if $E$ is invariant under the action of the Galois
  group $\Gal(\kbar/k)$, then the corresponding morphism is defined over $k$.
\end{proof}

\begin{lemma}\label{lem:blow_up}
  Let $\GG$ be a connected linear algebraic group over $k$, and let $S$ be a
  projective surface which is a $\GG$-variety over $k$.  Let $\pi: \tS \to S$
  be the blow-up of $S$ at a collection of distinct points defined over
  $\kbar$ that are invariant under the action of $\GG$ and conjugate under the
  action of the Galois group $\Gal(\kbar/k)$. Then $\tS$ can be equipped with
  a $\GG$-structure over $k$ in such a way that $\pi:\tS \to S$ is a
  $\GG$-equivariant $k$-morphism.
\end{lemma}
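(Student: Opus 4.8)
The plan is to combine the known statement over $\kbar$ with a Galois-descent argument, entirely parallel to the proof of Lemma~\ref{lem:blow_down}. First I would work over $\kbar$. Let $P_1, \dots, P_m$ be the distinct $\kbar$-points of $S$, invariant under the $\GG$-action and forming a single Galois orbit. Since each $P_i$ is fixed by $\GG$, the $\GG$-action on $S$ lifts to a $\GG$-action on the blow-up $\Bl_{P_i} S$: concretely, on the local ring at $P_i$, the group $\GG$ acts fixing the maximal ideal $\mathfrak{m}_{P_i}$, hence acts on the associated Rees algebra $\bigoplus_{n \ge 0} \mathfrak{m}_{P_i}^n$, and taking $\mathrm{Proj}$ gives the desired lift; the exceptional divisor is $\GG$-invariant. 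Blowing up the finitely many $P_i$ simultaneously (they are distinct, so the blow-ups at different points commute and can be performed in any order), one obtains a $\GG$-action on $\tS_\kbar$ making $\pi_\kbar \colon \tS_\kbar \to S_\kbar$ a $\GG$-equivariant $\kbar$-morphism. If one prefers, this is exactly the content of \cite[Proposition~5.1]{MR1731473} in the case $\GG = \Ga^2$, and the same argument works for any connected linear algebraic group.

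Next I would descend to $k$. The center of the blow-up, namely the reduced closed subscheme $Z = \{P_1, \dots, P_m\}$, is Galois-stable by hypothesis, hence is defined over $k$ as a closed subscheme $Z \subset S$; therefore the blow-up $\tS = \Bl_Z S$ is defined over $k$ and $\pi \colon \tS \to S$ is a $k$-morphism with $\pi_\kbar = \pi \times_k \kbar$ the map constructed above. It remains to check that the $\GG$-action on $\tS_\kbar$ is defined over $k$, i.e.\ is $\Gal(\kbar/k)$-stable. The action map $\GG_\kbar \times \tS_\kbar \to \tS_\kbar$ is characterized uniquely by the property that it lifts the (Galois-stable) action $\GG_\kbar \times S_\kbar \to S_\kbar$ through the (Galois-stable) morphism $\pi_\kbar$, together with the universal property of the blow-up along the Galois-stable center $Z_\kbar$; hence for each $\sigma \in \Gal(\kbar/k)$ the conjugated map $\sigma$-action again lifts the same action of $\GG$ on $S_\kbar$ through $\pi_\kbar$, and by uniqueness it coincides with the original one. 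Thus the action descends to a morphism $\GG \times \tS \to \tS$ over $k$, and the open subset of $S$ equivariantly isomorphic to $\GG$ does not meet $Z$ (being isomorphic to a group, $S$ has no fixed points there unless $\GG$ is trivial — and in any case one may shrink it), so its preimage gives an open $\GG$-equivariantly isomorphic to $\GG$ inside $\tS$. Hence $\tS$ is a $\GG$-variety over $k$ and $\pi$ is $\GG$-equivariant.

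The main obstacle, such as it is, lies in the descent step: one must be slightly careful that the $\GG$-structure on $\tS_\kbar$ is not merely \emph{some} lift but the \emph{canonical} one coming from the universal property, so that Galois-invariance is automatic; writing the action via the Rees algebra $\bigoplus_n \mathfrak{m}_Z^n$ (which is itself defined over $k$) makes this transparent and avoids any choices. A minor point to address is that the open set $U \cong \GG$ witnessing the equivariant compactification structure on $S$ must be disjoint from the center $Z$; since $Z$ consists of $\GG$-fixed points while $\GG$ acts on $U$ by translations (which are fixed-point-free for a nontrivial group, and trivially so otherwise), after possibly replacing $U$ by $U \setminus Z$ — still open, still $\GG$-stable, still equivariantly isomorphic to $\GG$ as $\GG$ acts transitively on $U$ — we may assume $U \cap Z = \emptyset$, so that $\pi^{-1}(U) \xrightarrow{\sim} U$ furnishes the required open subset of $\tS$.
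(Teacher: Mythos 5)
Your proposal is correct, and the essential tool is the same as the paper's: the universal property of blow-ups applied to the action map, using $\GG$-invariance of the centre. The organisational difference is that you construct the lifted action over $\kbar$ (via the Rees algebra of $\mathfrak{m}_Z$) and then descend to $k$ by a Galois-stability-plus-uniqueness argument, whereas the paper works directly over $k$ from the start: since the conjugate points form a $k$-subscheme $Z$, the blow-up and the morphism $f:\GG\times S\to S$ are already $k$-morphisms, and the universal property (applied to $f\circ(\mathrm{id},\pi)$, whose pullback of the ideal of $Z$ is the Cartier divisor $\GG\times E$) produces the lift $\widetilde f$ over $k$ in one step, making your descent paragraph unnecessary. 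What the paper does that you gloss over is the verification that $\widetilde f$ is actually a group action: a priori the identities $ex=x$ and $(gh)^{-1}g(h(x))=x$ are only known on $\tS\setminus E$, and the paper extends them to all of $\tS$ by density of that open set; in your framework the same point is settled by uniqueness of lifts through the blow-up, but you should say so explicitly. Finally, your parenthetical about ``replacing $U$ by $U\setminus Z$'' is off: an open subset of $\GG$ with a point removed is no longer isomorphic to $\GG$. The correct (and simpler) observation is that $U\cap Z=\emptyset$ automatically, because the points of $Z$ are $\GG$-fixed while $\GG$ acts simply transitively on $U$; the paper does not even need this, since Lemma~\ref{lem:blow_up} only asserts that $\tS$ carries a $\GG$-structure compatible with $\pi$, and the open orbit comes for free from that of $S$.
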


\begin{proof}
  It is clear that the blow-up of conjugate points is defined over $k$. Thus
  it suffices to show that this morphism is also $\GG$-equivariant.

  Let $E$ be the exceptional divisor of the blow-up. Then applying the
  universal property of blow-ups \cite[Corollary~II.7.15]{MR0463157} to the
  natural $k$-morphism $f: \GG \times S \to S$, we see that there exists a
  $k$-morphism $\widetilde{f}$ such that the following diagram commutes.
  \begin{equation*}
    \xymatrix{\GG \times \tS \ar[d]_{(\mathrm{id},\pi)} \ar[r]^{\widetilde{f}}
      & \tS \ar[d]^{\pi} \\ \GG \times S \ar[r]^f & S }
  \end{equation*}
  A priori, we only know that the map $\widetilde{f}$ satisfies the identities
  $ex=x$ and $(gh)^{-1}g(h(x))=x$ for all $g,h \in \GG$ and $x \in
  \tS\setminus E$. However any morphism which is equal to the identity on an
  open dense subset of $\tS$ must also be equal to the identity on all of
  $\tS$. That is, these identities do in fact hold on all of $\tS$ and we get
  an action of $\GG$ on $\tS$ over $k$.
\end{proof}

\begin{lemma}\label{lem:singular}
    Let $S$ be a singular del Pezzo surface over $k$, and $\tS$ its
    minimal desingularisation. Then $S$ is a $\Ga^2$-variety over $k$ if and
    only if $\tS$ is.
\end{lemma}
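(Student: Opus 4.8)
The plan is to prove the two implications separately, working with the minimal desingularisation $\pi\colon \tS \to S$ and the standard facts that, because $S$ has only $\mathbf{ADE}$-singularities, $\pi$ contracts precisely the $(-2)$-curves of $\tS$, satisfies $\pi_*\O_{\tS} = \O_S$, and is crepant (so $K_{\tS} = \pi^*K_S$); in particular $S$ is the anticanonical model $\operatorname{Proj}\bigoplus_{n\ge 0} H^0(\tS, -nK_{\tS})$ of $\tS$, and all of this is defined over $k$.

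Suppose first that $\tS$ is a $\Ga^2$-variety over $k$. Any automorphism of $\tS$ sends $(-2)$-curves to $(-2)$-curves, hence permutes the finitely many of them and preserves their union $Z$; since $S$ is obtained from $\tS$ by contracting $Z$ (equivalently, via the anticanonical model above), there is a natural group homomorphism $\Aut(\tS) \to \Aut(S)$, defined over $k$ and compatible with $\pi$. Transporting the action of $\Ga^2$ on $\tS$ along this homomorphism makes $\pi$ a $\Ga^2$-equivariant $k$-morphism. Let $U \subset \tS$ be the open orbit, so that $U$ is equivariantly isomorphic to $\Ga^2$ over $k$. As $U$ is affine it contains no complete curve, so it is disjoint from every $(-2)$-curve; hence $\pi|_U$ is an open immersion, and being equivariant it identifies $\Ga^2$ with an open subvariety of $S$ over $k$. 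Thus $S$ is a $\Ga^2$-variety over $k$.

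Conversely, suppose $S$ is a $\Ga^2$-variety over $k$. I would factor $\pi$ as a finite sequence of blow-ups at closed points, possibly interleaved with normalisations --- at each stage, blow up the reduced singular locus and normalise --- and note that this terminates in the minimal desingularisation, which is the classical description of the minimal resolution of a Du Val singularity (no $(-1)$-curve is ever created). At each stage the intermediate surface $S'$ is a $\Ga^2$-variety whose singular locus is a finite $\Ga^2$-stable set of closed points; since $\Ga^2$ is connected, each such point is in fact fixed by $\Ga^2$, and the points lying over a fixed closed point of the base form a single $\Gal(\kbar/k)$-conjugate collection. Lemma~\ref{lem:blow_up} therefore equips each blow-up with a $\Ga^2$-structure over $k$ for which the blow-down is equivariant, and normalisation, being functorial, carries this structure along; composing the resulting morphisms gives a $\Ga^2$-action on $\tS$ over $k$ making $\pi$ equivariant. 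Since the open orbit $V \subset S$ lies in the smooth locus (it is isomorphic to $\Ga^2$, which is smooth), $\pi$ restricts to an isomorphism over $V$, exhibiting $\tS$ as a $\Ga^2$-variety over $k$.

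The main obstacle is the converse: one must make sure that iterating ``blow up singular points and normalise'' really produces the \emph{minimal} desingularisation and not a larger resolution, which is where the structure theory of $\mathbf{ADE}$-singularities recalled in Section~\ref{sec:preliminary} enters. Once $\pi$ has been made equivariant, locating the open orbit on $\tS$ is automatic, as above, because $\pi$ is birational and restricts to an isomorphism over the (smooth) open orbit of $S$.
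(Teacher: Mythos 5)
Your proof is correct and follows essentially the same route as the paper: the action descends from $\tS$ to $S$ because it preserves the $(-2)$-curves and hence the anticanonical contraction, and it lifts from $S$ to $\tS$ because the finitely many singular points are fixed by the connected group $\Ga^2$ and at worst permuted by Galois, so Lemma~\ref{lem:blow_up} applies to the resolving blow-ups. You supply two details the paper leaves implicit (that the open orbit on $\tS$ avoids the $(-2)$-curves, and that iterated point blow-ups of Du Val singularities really yield the \emph{minimal} resolution), but these are refinements, not a different argument.
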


\begin{proof}
  Suppose $S$ is a $\Ga^2$-variety over $k$. Since $\Ga^2$ is connected, the
  orbit of a singularity under this action is connected as well. Furthermore,
  every point in the orbit is a singularity as well (since translation by an
  element of $\Ga^2$ is an isomorphism). But there is only a finite number of
  (isolated) singularities. Therefore, the orbit is just one point, so that
  each singularity is fixed under the $\Ga^2$-action. By a similar argument,
  we see that the Galois group $\Gal(\kbar/k)$ at worst swaps any
  singularities.  Hence we can resolve the singularities via blow-ups and
  applying Lemma~\ref{lem:blow_up}, we see that $\tS$ is also a
  $\Ga^2$-variety over $k$.

  Next, suppose that $\tS$ is a $\Ga^2$-variety over $k$. The anticanonical
  class is defined over $k$, and hence the anticanonical map (or a multiple of
  it in degrees $1$ and $2$) is defined over $k$ and contracts precisely the
  $(-2)$-curves, so that its image is the corresponding singular del Pezzo
  surface $S$. This map is $\Ga^2$-equivariant by
  \cite[Proposition~2.3]{MR1731473} and \cite[Corollary~2.4]{MR1731473}.
\end{proof}

\begin{lemma}\label{lem:negative_curves}
  If a generalised del Pezzo surface $\tS$ is an equivariant compactification
  of $\Ga^2$ over $k$, then the number of negative curves contained in
  $\tS_\kbar$ is at most the rank of $\Pic(\tS_\kbar)$.
\end{lemma}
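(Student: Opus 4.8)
The plan is to analyse the boundary $D = \tS_\kbar \setminus U$, where $U \subseteq \tS_\kbar$ denotes the open orbit that is equivariantly isomorphic to $\Ga^2$. Since $\Ga^2$ is affine and $\tS_\kbar$ is a smooth projective surface, the complement $D$ is a nonempty curve of pure codimension one (a standard fact about complements of affine open subsets in smooth projective varieties, for which a reference may be cited); let $D_1,\dots,D_m$ be its irreducible components.

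First I would determine $m$. The standard exact sequence associated to the open immersion $U \hookrightarrow \tS_\kbar$,
\begin{equation*}
  \O(U)^*/\kbar^* \;\longrightarrow\; \bigoplus_{i=1}^m \ZZ\,[D_i] \;\longrightarrow\; \Pic(\tS_\kbar) \;\longrightarrow\; \Pic(U) \;\longrightarrow\; 0 ,
\end{equation*}
together with the facts that $\Pic(\Ga^2) = 0$ and that the only invertible regular functions on $\Ga^2$ are the constants, shows that the map $\bigoplus_i \ZZ\,[D_i] \to \Pic(\tS_\kbar)$ is an isomorphism. In particular $[D_1],\dots,[D_m]$ is a $\ZZ$-basis of $\Pic(\tS_\kbar)$, so $m = \rk\Pic(\tS_\kbar)$.

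The key step is then to show that every negative curve on $\tS_\kbar$ is one of the $D_i$. Suppose for contradiction that a negative curve $C$ meets $U$. Under the equivariant isomorphism $U \cong \Ga^2$ the group acts by translation, so the $\Ga^2$-orbit of any point of $C \cap U$ is all of $U$; hence the translates $\{g\cdot C : g \in \Ga^2\}$ sweep out a dense subset of $\tS_\kbar$, and they cannot all equal $C$ --- otherwise $C$ would be $\Ga^2$-invariant and would contain the $2$-dimensional orbit $U$. Pick $g$ with $g\cdot C \neq C$. Since $\Ga^2$ is connected, translation by $g$ acts trivially on $\mathrm{NS}(\tS_\kbar)$, so $C$ and $g\cdot C$ are distinct irreducible curves lying in the same numerical class, whence $C^2 = C\cdot(g\cdot C) \ge 0$, contradicting $C^2 < 0$. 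Therefore $C \subseteq D$, and being irreducible, $C = D_i$ for some $i$.

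Combining the two steps, the set of negative curves on $\tS_\kbar$ is a subset of $\{D_1,\dots,D_m\}$, so its cardinality is at most $m = \rk\Pic(\tS_\kbar)$. The step I expect to require the most care is the middle one: one must justify that a connected algebraic group acts trivially on the N\'eron--Severi group (so that the translates of a negative curve stay in its numerical class, enabling the moving-curve contradiction), and check that these translates genuinely differ from $C$. The remaining ingredients --- the exact sequence above and the statement about complements of affine opens --- are standard.
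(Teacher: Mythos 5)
Your proof is correct and follows essentially the same route as the paper's: negative curves must lie in the boundary of the open orbit, and the boundary components freely generate $\Pic(\tS_\kbar)$, so there are exactly $\rk\Pic(\tS_\kbar)$ of them. The only difference is that the paper cites Hassett--Tschinkel for both key inputs (the triviality of the $\Ga^2$-action on $\Pic(\tS_\kbar)$ combined with the fact that a negative curve is the unique effective divisor in its class, and the freeness of the boundary classes), whereas you derive them yourself via the localization sequence for $\Pic$ and a translate-intersection argument; both versions are sound.
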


\begin{proof}
  As explained in \cite[Section~2.1]{MR1731473}, the complement of the open
  $\Ga^2$-orbit on $\tS_\kbar$ is a divisor, called the boundary divisor.  By
  \cite[Proposition~2.3]{MR1731473}, $\Ga^2$ acts trivially on
  $\Pic(\tS_\kbar)$, and since any negative curve is the unique effective
  divisor in its divisor class, $\Ga^2$ must fix each negative curve (not
  necessarily pointwise). Therefore, negative curves must be components of the
  boundary divisor. By \cite[Theorem~2.5]{MR1731473}, the Picard group of
  $\tS_\kbar$ is \emph{freely} generated by its irreducible components, and
  the result follows.
\end{proof}

\section{Strategy}\label{sec:strategy}

In the proof of our main result, we will show explicitly whether certain
singular del Pezzo surfaces are $\Ga^2$-varieties. We use the following
strategy.  In this section, we work over an algebraically closed field $\kbar$
of characteristic~$0$.

Let $i: S \inj \PP^d$ be an anticanonically embedded singular del
Pezzo surface of degree $d \in \{3, \dots, 7\}$, and let $\pi_0: \tS
\to S$ be its minimal desingularisation, which is also the blow-up
$\pi_1: \tS \to \Ptwo$ of $\Ptwo$ in $r = 9-d$ points in almost
general position. We have the diagram
\begin{equation}\label{eq:blow-ups}
  \begin{split}
    \xymatrix{\tS \ar[rrd]^{\pi_1} \ar[d]_{\pi_0} & & \\
      S \ar[r]_i& \PP^d \ar@{-->}[r]_{\pi_2} & \Ptwo
      \ar@/^1.5pc/@{-->}[ll]^\phi}
  \end{split}
\end{equation}
where $\pi_2: \PP^d \rto \Ptwo$ is the projection to a plane in
$\PP^d$ and $\phi: \Ptwo \rto S$ is the inverse of $\pi_2 \circ i$,
given by a linear system of cubics $V \subset H^0(\Ptwo,
\O_\Ptwo(3))$.

If $S$ is a $\Ga^2$-variety, this induces $\Ga^2$-structures on
$\tS$ and $\Ptwo$, by Lemma~\ref{lem:singular} and
Lemma~\ref{lem:blow_down}; in other words, any $\Ga^2$-structure on
$S$ is induced by a $\Ga^2$-structure on $\Ptwo$. To find a
$\Ga^2$-structure on $S$ or to prove that it does not exist, we
would like to test whether one of the $\Ga^2$-structures on $\Ptwo$
induces a $\Ga^2$-structure on $S$. This is done by checking whether
or not the linear system $V$ is invariant under the uniquely
determined induced $\Ga^2$-action on $H^0(\Ptwo, \O_\Ptwo(3))$ (see
\cite[Proposition~2.3]{MR1731473}). Note that it is not enough to
check whether the base points of $V$ are fixed under this action.

By Lemma~\ref{lem:P2}, there are only two equivalence classes of
$\Ga^2$-structures on $\Ptwo$. A priori, however, one might have to
test not one, but every $\Ga^2$-structure in each equivalence class.

Fortunately, we can simplify the task as follows. For the del Pezzo
surfaces that we are interested in, the number of negative curves on
$\tS$ is $\rk \Pic(\tS) = r+1$. Indeed, this follows from
Lemma~\ref{lem:negative_curves} and the fact that the cone of
effective divisors in $\Pic(\tS) \otimes_\ZZ \RR \cong \RR^{r+1}$ is
full-dimensional and generated by negative curves for $d \le 7$ by
\cite[Theorem~3.10]{MR2377367}. Under the map $\pi_1: \tS \to
\Ptwo$, one negative curve is mapped to a line $\ell \subset \Ptwo$,
while the other $r$ negative curves are projected to (one or more)
points $p_1, \dots, p_n$ on $\ell$.

As explained in the proof of Lemma~\ref{lem:negative_curves}, any
$\Ga^2$-structure on $\tS$ fixes the negative curves (not
necessarily pointwise). Therefore, any $\Ga^2$-structure on $\Ptwo$
that induces a $\Ga^2$-structure on $S$ and $\tS$ must fix $\ell$
and $p_1, \dots, p_n$.

This restricts the $\Ga^2$-structures on $\Ptwo$ that we must
consider in each of the two equivalence classes of $\tau, \rho$
described in Lemma~\ref{lem:P2}. Let us work this out explicitly, in
coordinates $x_0,x_1,x_2$ on $\Ptwo$ such that $\ell = \{x_0=0\}$
and $p_1=(0:0:1)$.
\begin{itemize}
\item \emph{$\Ga^2$-structures equivalent to $\tau$:} Consider the
  diagram~\eqref{eq:equivalence} where $X_1$ is $\Ptwo$ with the standard
  structure $\tau$, and $X_2$ is $\Ptwo$ with an equivalent structure $\tau'$.
  The diagram is commutative if and only if
  \begin{equation*}
    \tau'(\alpha(a,b))\xx = j(\tau(a,b)(j^{-1}(\xx)))
  \end{equation*}
  for any $(a,b) \in \Ga^2$ and $\xx \in \Ptwo$.
  The isomorphism $j: X_1 \to X_2$ is given by a matrix $A \in
  \PGL_3(\kbar)$ that must be of the form
  \begin{equation*}
    A =
    \begin{pmatrix}
      1 & 0 & 0 \\ a_{10} & a_{11} & a_{12} \\ a_{20} & a_{21} & a_{22}
    \end{pmatrix}
  \end{equation*}
  since it must map the line fixed by $\tau$ to $\ell$. It is now
  straightforward to compute that
  \begin{equation*}
    \tau'(\alpha(a,b))=
    \begin{pmatrix}
      1 & 0 & 0 \\
      a_{11} a + a_{12} b & 1 &  0\\
      a_{21} a + a_{22} b & 0 &  1
    \end{pmatrix}.
  \end{equation*}
  Since $\alpha$ is an automorphism of $\Ga^2$ and the lower right $2 \times
  2$-submatrix of $A$ is invertible, the linear series $V$ defining $\phi :
  \Ptwo \rto S$ is invariant under $\tau'$ if and only if it is invariant
  under the standard structure $\tau$.
\item \emph{$\Ga^2$-structures equivalent to $\rho$:} We argue as in the case
  of $\tau$. Since $\rho$ fixes a line $\{x_0=0\}$, but only one point
  $(0:0:1)$ on it, a structure $\rho'$ equivalent to $\rho$ might induce an
  action on $S$ only if $\pi_1$ maps the negative curves on $\tS$ to $\ell$
  fixed by $\rho'$ and one point $p_1$ fixed by $\rho'$. Therefore, $\tS$ must
  be the blow-up of precisely one point in $\Ptwo$ and further points on the
  exceptional divisors.

  This also further restricts the shape of the matrix of $j$. Computing the
  matrix of $\rho'(\alpha(a,b))$ is now straightforward. We omit it here, but
  remark that it is in general unclear whether testing the linear series $V$
  defining $\phi : \Ptwo \rto S$ for invariance under $\rho$ is enough -- we
  might have to consider all equivalent $\rho'$, using the matrices that we
  just computed.

  However, in our applications the following fact will be sufficient: the
  matrix of $\rho'(\alpha(a,b))$ is a lower triangular matrix with ``1''s on
  the diagonal and the property that, for any choice of $j$, its entries below
  the diagonal are non-zero for general $(a,b) \in \Ga^2$.
\end{itemize}

\section{Proof of the main result}

Here, $k$ is a field of characteristic $0$ with algebraic closure $\kbar$.  By
Lemma~\ref{lem:singular}, we can interchange freely between a singular del
Pezzo surface and its minimal desingularisation.

We apply Lemma~\ref{lem:negative_curves} and extract those
generalised del Pezzo surfaces $S$ whose number of negative curves
is at most the rank of  $\Pic(S_{\kbar})$ from the classification of
generalised del Pezzo surfaces that can be found in
\cite{MR80f:14021}, \cite{MR89f:11083}, \cite{MR2227002} (see
\cite[Tables~2--5]{math.AG/0604194} for a summary of the data
relevant to us). This leaves the $16$ types of surfaces of degrees
$1$ to $9$ that can be found in Figure~\ref{fig:blow-ups}, together
with various blow-up maps between them.

Note that, over $\kbar$, all of them except the degree $1$ del Pezzo
surface of type $\Eeight$ (which has two isomorphism classes by
\cite[Lemma~4.2]{MR1933881}) are unique up to isomorphism. Indeed,
this is true for type $\Aone$ of degree $6$ with $3$ lines because
its minimal desingularisation is the blow-up of $\Ptwo$ in three
points on one line, which are clearly unique up to automorphism of
$\Ptwo$; a similar argument applies to all cases of degree $\ge 7$.
Uniqueness is known for type $\Eseven$ of degree $2$ by
\cite[Lemma~4.6]{MR1933881}. For types $\Esix$ and $\Dfive$ of
degree $3$, uniqueness was proved in \cite{MR80f:14021}, and all
remaining del Pezzo surfaces of degree $4$, $5$ and $6$ are obtained
from the desingularisations of these two cubic surfaces by
contracting certain $(-1)$-curves, which implies that they are also
unique (for type $\Athree$ of degree $5$, which can be obtained from
type $\Dfour$ of degree $4$ in two ways, we observe additionally
that there is an automorphism of the quartic del Pezzo surface with
$\Dfour$ singularity which swaps the two lines).

Over $k$, the split generalised del Pezzo surfaces of degree $\ge 3$ in
question are unique up to isomorphism. Indeed, for the cubic surface $S$ of
type $\Esix$ (resp.\ $\Dfive$), \cite[Theorem~3]{sakamaki} (stated over $\CC$,
but the proof works over any algebraically closed field of characteristic $0$)
determines the automorphism group $\Aut(S_\kbar)$ as $\kbar \rtimes \kbar^*$
(resp.\ $\kbar^*$), hence $H^1(\Gal(\kbar/k), \Aut(S_\kbar))$ is trivial and
$S$ has no non-trivial forms over $k$.  For the remaining types of degree $\ge
4$, uniqueness follows as before.

Using the strategy described in Section~\ref{sec:strategy}, we show
that the following three surfaces are $\Ga^2$-varieties by
describing a $\Ga^2$-action explicitly.

\begin{lemma}
  The following split singular del Pezzo surfaces are $\Ga^2$-varieties:
    \begin{itemize}
        \item type $\Dfive$ of degree $4$,
        \item type $\Athree$ of degree $5$,
        \item type $\Aone$ of degree $6$ (with $3$ lines).
    \end{itemize}
\end{lemma}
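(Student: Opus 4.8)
The plan is to apply the strategy of Section~\ref{sec:strategy} to each of the three surfaces in turn. In each case, we work over $\kbar$ and use the diagram \eqref{eq:blow-ups}: the minimal desingularisation $\tS$ is a blow-up $\pi_1 : \tS \to \Ptwo$ of $\Ptwo$ in $r = 9-d$ points in almost general position, and $\phi : \Ptwo \rto S$ is given by a linear system $V \subset H^0(\Ptwo, \O_\Ptwo(3))$ of cubics through these points (with appropriate infinitely near conditions). The goal is to exhibit an explicit choice of the configuration of points, compute the resulting linear system $V$ in coordinates, and then verify that $V$ is invariant under one of the two $\Ga^2$-structures on $\Ptwo$ described in Lemma~\ref{lem:P2}; by Lemma~\ref{lem:singular} and Lemma~\ref{lem:blow_down} this is exactly what is needed to conclude that $S$ is a $\Ga^2$-variety. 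Since the surfaces are split and, over $\kbar$, unique up to isomorphism of their type (as recalled just before the lemma), it suffices to treat one representative configuration in each degree.

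First I would treat type $\Dfive$ of degree $4$. The desingularisation is the blow-up of $\Ptwo$ in five points in almost general position whose dual graph of negative curves is the $\Dfive$ extended Dynkin diagram; concretely one takes a point on a line together with a chain of infinitely near points, all supported over a single point of $\Ptwo$. Following Section~\ref{sec:strategy}, since $\pi_1$ must send the negative curves to a line $\ell$ and to points on it that are fixed, and since in the $\Dfive$ case all blown-up points lie over one point of $\ell$, the relevant $\Ga^2$-structure is the one equivalent to $\rho$ (the one fixing a line and only one point on it). Choosing coordinates $x_0, x_1, x_2$ with $\ell = \{x_0 = 0\}$ and the relevant point $p_1 = (0:0:1)$, I would write down the four-dimensional linear system $V$ of cubics realising this configuration, apply the action of $\rho(a,b)$ to a basis of $H^0(\Ptwo, \O_\Ptwo(3))$, and check directly that $V$ is carried to itself. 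The analogous computation handles type $\Athree$ of degree $5$ (blow-up in four points in almost general position, again all lying over one point of $\ell$, hence again the $\rho$-type structure, with $V$ a five-dimensional linear system of cubics) and type $\Aone$ of degree $6$ with three lines, whose desingularisation is the blow-up of $\Ptwo$ in three distinct collinear points — here one checks invariance of the corresponding six-dimensional linear system $V$ under either $\tau$ or $\rho$, whichever the configuration forces.

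The main obstacle is the explicit bookkeeping: one must pin down the configuration of (possibly infinitely near) blown-up points so that the resulting surface really has the stated singularity type, translate the infinitely-near conditions into vanishing conditions on cubics and their derivatives, produce an explicit basis of $V$, and then carry out the matrix computation showing $\rho(a,b)^* V = V$ (equivalently, that the ideal of base conditions is preserved). None of these steps is conceptually deep, but they require care — in particular, one should double-check that the chosen configuration lies in almost general position (no blown-up point on a $(-2)$-curve of the intermediate surface) and that the linear system one writes down is genuinely the anticanonical one, i.e. base-point-free away from the exceptional locus and of the correct projective dimension $d$. Once the explicit $\Ga^2$-action on $\Ptwo$ is seen to preserve $V$, the conclusion that $S$ (and hence $\tS$) is a $\Ga^2$-variety is immediate from the lemmas of Section~\ref{sec:preliminary}.
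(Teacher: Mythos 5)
Your overall plan is the paper's own strategy from Section~\ref{sec:strategy} (realise $\tS$ as a blow-up of $\Ptwo$, extract the cubic linear system $V$ defining $\phi:\Ptwo\rto S$, test $V$ for invariance under one of the two structures of Lemma~\ref{lem:P2}, then conclude via Lemmas~\ref{lem:singular}, \ref{lem:blow_up} and~\ref{lem:blow_down}), and the paper executes exactly this, working with explicit anticanonical models in $\PP^d$ and explicit matrix representations of the induced action. However, there is a concrete wrong turn in your proposal: for type $\Dfive$ (and likewise $\Athree$) you commit to verifying invariance under the structure $\rho$, on the grounds that all blown-up points are infinitely near a single point of $\ell$. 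That inference is backwards. The condition ``all negative curves map to $\ell$ and a single point $p_1$'' is what makes $\rho$-type structures \emph{admissible candidates}; it does not exclude $\tau$, which fixes $\ell$ pointwise and is therefore always a candidate in this strategy. In fact $\tau$ is the structure that works, and it is the one the paper checks in both cases. For the $\Dfive$ quartic the $\rho$-check genuinely fails with the standard choices: with $\ell=\{x_0=0\}$, $p_1=(0:0:1)$ and $V=\langle x_0^3,\,x_0x_2^2,\,x_0^2x_2,\,x_0^2x_3,\,x_2^3-x_0x_3^2\rangle$, applying $\rho$ (which sends $x_3\mapsto x_3+ax_2+(b+\tfrac12 a^2)x_0$) to $x_2^3-x_0x_3^2$ produces a term $-2a\,x_0x_2x_3$, and the monomial $x_0x_2x_3$ does not occur in $V$; so $V$ is not $\rho$-invariant. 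Your computation as planned would therefore come up empty for two of the three surfaces, and you would have to backtrack to $\tau$ --- which your opening sentence (``one of the two structures'') permits, but which your detailed plan does not anticipate.

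Two smaller points. First, for type $\Aone$ of degree $6$ with $3$ lines, no linear-system computation is needed: the three blown-up points are distinct points of $\ell$ (so $\rho$ is excluded outright), $\tau$ fixes $\ell$ pointwise, and Lemma~\ref{lem:blow_up} immediately gives the $\Ga^2$-structure on the blow-up; this is how the paper argues. Second, your bookkeeping concerns are legitimate but are sidestepped in the paper by starting from known explicit anticanonical equations of these surfaces (e.g.\ $x_0x_1-x_2^2=x_0x_4-x_1x_2+x_3^2=0$ for $\Dfive$) together with an explicit projection $\pi_2$ and its inverse $\phi$, rather than reconstructing the infinitely near point configuration from the Dynkin diagram; checking invariance of $S$ under the explicit induced matrix action on $\PP^d$ is then a routine verification.
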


\begin{proof}
  We treat each case individually and use the notation of
  diagram~(\ref{eq:blow-ups}).
  \begin{itemize}
  \item \emph{$\Dfive$ of degree $4$:} An anticanonical embedding $i: S \inj
    \PP^4$ of this singular del Pezzo surface is:
    \begin{equation*}
      S:x_0x_1-x_2^2= x_0x_4-x_1x_2+x_3^2=0.
    \end{equation*}
    A birational map to $\Ptwo$ is given via the projection $\pi_2:\PP^4 \rto
    \Ptwo$ defined by $\xx \mapsto (x_0:x_2:x_3)$. The image of one of the
    $(-2)$-curves on the minimal desingularisation $\pi_0: \tS \to S$ under
    $\pi_1: \tS \to \Ptwo$ is $\ell=\{x_0=0\}$.

    As explained in Section~\ref{sec:strategy}, in this situation, the only
    $\Ga^2$-structure on $\Ptwo$ in the equivalence class of $\tau$
    (cf. Lemma~\ref{lem:P2}) that might induce an action on $S$ is the
    structure $\tau$ itself.

    We compute the induced action on $S$ via the inverse
    \begin{align*}
      \phi : \Ptwo &\rto S \\
      (x_0:x_2:x_3) &\mapsto
      (x_0^3:x_0x_2^2:x_0^2x_2:x_0^2x_3:x_2^3-x_0x_3^2)
    \end{align*}
    of $\pi_2 \circ i$. For $(a,b) \in \Ga^2$, it is given by
    \begin{equation*}
      \begin{pmatrix}
        1       & 0   & 0     & 0  & 0 \\
        a^2     & 1   & 2a    & 0  & 0 \\
        a       & 0   & 1     & 0  & 0 \\
        b       & 0   & 0     & 1  & 0 \\
        b^2-a^3 & -3a & -3a^2 & 2b & 1 \\
      \end{pmatrix}.
    \end{equation*}
    It is easy enough to check that $S$ is invariant under this.

    We note that the action on the line $\{x_0=x_2=x_3=0\}$ in $S$ is
    non-trivial, with the fixed point being the singularity of $S$.  So there
    is no hope of blowing up a point on this surface to create another
    equivariant compactification of $\Ga^2$ of degree $3$ from this structure.
  \item \emph{$\Athree$ of degree $5$:} In the model
    \begin{equation*}
      \begin{split}
        S: {}&x_0x_2-x_1^2 = x_0x_3-x_1x_4 = x_2x_4-x_1x_3\\
        ={}&x_2x_4+x_4^2+x_0x_5 = x_2x_3+x_3x_4+x_1x_5 = 0
      \end{split}
    \end{equation*}
    given in \cite[Section~6]{math.AG/0604194}, we can choose $\pi_2$ as $\xx
    \mapsto (x_0:x_1:x_4)$. Then $\pi_1$ maps one of the $(-2)$-curves to
    $\ell=\{x_0=0\}$. This motivates us to consider the action on $\PP^5$
    induced by $\tau$ on $\Ptwo$ that is given by the representation
    \begin{equation*}
      \begin{pmatrix}
        1         & 0    & 0  & 0   & 0       & 0 \\
        a         & 1    & 0  & 0   & 0       & 0 \\
        a^2       & 2a   & 1  & 0   & 0       & 0 \\
        ab        & b    & 0  & 1   & a       & 0 \\
        b         & 0    & 0  & 0   & 1       & 0 \\
        -a^2b-b^2 & -2ab & -b & -2a & -a^2-2b & 1 \\
      \end{pmatrix}.
    \end{equation*}
    One easily checks that it fixes $S$.
  \item \emph{$\Aone$ of degree $6$ (with $3$ lines):} This surface is the
    blow-up of three points on the line at infinity in $\Ptwo$.  However, the
    action of $\tau$ on $\Ptwo$ fixes this line. Then a simple application of
    Lemma~\ref{lem:blow_up} shows that this surface is a $\Ga^2$-variety.
  \end{itemize}
  This completes the proof of the lemma.
\end{proof}

Since these three split singular del Pezzo surfaces are $\Ga^2$-varieties, the
same holds for the corresponding split generalised del Pezzo surfaces.
Contracting the $(-1)$-curves and using Lemma~\ref{lem:blow_down}, all other
split generalised del Pezzo surfaces marked by a box in
Figure~\ref{fig:blow-ups} are $\Ga^2$-varieties, and the same holds for the
corresponding split singular del Pezzo surfaces.

We now need to determine $\Ga^2$-structures on the corresponding
non-split surfaces. Our task is made easier by the fact that many of
the surfaces under consideration are automatically split.

\begin{lemma}\label{lem:always_split}
  Any form of $\Ptwo$ or $\FF_2$ with a $k$-rational point is split.
  Moreover, any form of $\Bl_1\Ptwo$ and any generalised del Pezzo
  surface with degree $d=7$ of type $\Aone$, $d=6$ of type
  $\Atwo+\Aone$ or $2\Aone$, $d=5$ of type $\Afour$ or $\Athree$ or
  $d=4$ of type $\Dfive$ is split.
\end{lemma}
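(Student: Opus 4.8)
The plan is to show that each listed surface has trivial first Galois cohomology with coefficients in its automorphism group, which by the usual twisting argument forces every $k$-form to be the split one (and for the surfaces with an assumed $k$-rational point, possibly combined with a direct geometric argument). First I would treat the two exceptional cases that are not del Pezzo surfaces of the ``blown-up $\Ptwo$'' shape: for a form $X$ of $\Ptwo$ with a $k$-rational point, I would invoke the classical fact that a Severi--Brauer surface with a rational point is $\Ptwo$; for a form of $\FF_2$ with a $k$-rational point, $\FF_2$ is the $\Pone$-bundle $\PP(\O \oplus \O(2))$ over $\Pone$, whose automorphism group is connected (an extension of $\PGL_2$ by a unipotent/torus factor with no $\Gal$-cohomology obstruction of the right kind), and a form of $\Pone$ with a rational point is $\Pone$; I would spell out that the section at infinity (the unique $(-2)$-curve, or on $\FF_2$ the unique $(-2)$-section) is Galois-invariant, so the base $\Pone$ is a form of $\Pone$ and acquires a rational point, hence is split, and then the bundle splits because $H^1$ of the structure group vanishes.

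Next I would handle the remaining surfaces uniformly via their desingularisations. For each of the types $\Bl_1\Ptwo$, $d=7$ of type $\Aone$, $d=6$ of type $\Atwo+\Aone$ and of type $2\Aone$, $d=5$ of type $\Afour$ and of type $\Athree$, and $d=4$ of type $\Dfive$, I would pass to the minimal desingularisation $\tS$ and observe that the extended Dynkin diagram (the configuration of negative curves over $\kbar$, as recorded in Figure~\ref{fig:blow-ups} and Table~\ref{tab:overview}) has the property that the relevant $(-1)$-curves can be contracted in a $\Gal(\kbar/k)$-stable way down to $\Ptwo$. Concretely: in each of these cases there is a distinguished set of $(-1)$-curves whose classes span a sublattice of $\Pic(\tS_\kbar)$ that is characterised intrinsically (e.g. as the unique such configuration realising a birational morphism to $\Ptwo$, or as the orthogonal complement of the $(-2)$-curves together with a chosen anticanonical-type class), hence is permuted among themselves by Galois; contracting them gives a $k$-morphism $\tS \to X_0$ where $X_0$ is a form of $\Ptwo$. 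One then shows $X_0$ already carries $k$-rational points — for instance because the images of the contracted curves, or the images of components of the $(-2)$-configuration, furnish a Galois-stable finite set or a rational curve on $X_0$ — so $X_0 \cong \Ptwo$ over $k$, and chasing the Galois action on the contracted points shows they form a $\Gal$-stable configuration of the expected type in $\Ptwo(k)$ (collinear, or on a conic, etc.), forcing $\tS$ to be the split model.

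The main obstacle, and the step I would spend the most care on, is precisely the claim that the contracting set of $(-1)$-curves is Galois-stable and that the resulting points on $\Ptwo$ are individually $k$-rational rather than merely permuted. For the degree-$\geq 7$ cases this is easy (very few $(-1)$-curves, and the blown-up points are visibly unique up to $\PGL_3$). For $d=6$, $d=5$ and especially $d=4$ of type $\Dfive$ I would argue by examining the dual graph: the $(-2)$-curves form a connected $\mathbf A_n$ or $\mathbf D_5$ chain whose endpoints are distinguished (different numbers of incident $(-1)$-curves), so Galois cannot act nontrivially on the chain, and then each $(-1)$-curve is the unique negative curve meeting a prescribed set of $(-2)$-curves, hence individually Galois-invariant; by Lemma~\ref{lem:blow_down}-type reasoning the contractions are defined over $k$ and the blown-up points, being intersections of $k$-curves, lie in $\Ptwo(k)$. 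I would also note that for $\Bl_1\Ptwo$ the single $(-1)$-curve is obviously Galois-invariant (it is the unique negative curve), which gives that case immediately. Once every blown-up point is $k$-rational and the incidence configuration is the prescribed one, uniqueness up to $\PGL_3(k)$ of such point configurations (already invoked earlier in the proof of the main result) yields splitness, completing the lemma.
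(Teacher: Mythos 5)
Your proposal is essentially correct and, for the bulk of the lemma, follows the same route as the paper: for $\Bl_1\Ptwo$ and the singular types in degrees $7,6,5,4$ the decisive point is that the extended Dynkin diagram has no nontrivial automorphism, so every negative curve is individually $\Gal(\kbar/k)$-invariant, the contractions are defined over $k$, and the surface is exhibited as a blow-up of $\Ptwo$ in $k$-rational points; the paper states this in one line and you supply the (correct) graph-theoretic reason. Two small inaccuracies in your version: the opening framing via vanishing of $H^1(\Gal(\kbar/k),\Aut)$ is not what you actually execute and would be false for $\Ptwo$ without the rational-point hypothesis (that is exactly why the hypothesis is there), and for $d=6$ of types $2\Aone$ and $\Atwo+\Aone$ the $(-2)$-curves do \emph{not} form a connected chain --- but the dual graphs are still asymmetric, which is all you need. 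The one genuinely different piece is $\FF_2$: the paper blows up the given $k$-point $p$ to land on the degree-$7$ type-$\Aone$ surface (already known split and unique over $k$) and contracts back, also treating the case $p \in B$ by using the fibre through $p$ to produce a second rational point off $B$; you instead argue via the $\Pone$-bundle structure. Your route works, but as written it has two soft spots you should tighten: the base of the ruling acquires its $k$-point as the image of $p$ (not from the Galois-invariance of the $(-2)$-section, which is a form of the base and so carries no more points than it does), and ``the bundle splits because $H^1$ of the structure group vanishes'' should be replaced by the precise argument that the $(-2)$-section, being defined over $k$, gives the generic fibre a $k(t)$-point, so $S \cong \PP(\E)$ for a rank-$2$ bundle $\E$ on $\Pone_k$, which splits by Grothendieck's theorem and is forced to be $\O \oplus \O(2)$ up to twist by the self-intersection of the section. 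The paper's reduction to the degree-$7$ case is shorter because it recycles a case already proved.
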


\begin{proof}
  It is a classical result that any form of $\Ptwo$ with a $k$-rational point
  is split.

  The unique $(-1)$-curve on a form $S$ of $\Bl_1\Ptwo$ is defined over
  $k$. Its contraction gives a form of $\Ptwo$ with a $k$-rational point (the
  image of the $(-1)$-curve), so that this form is $\Ptwo$ itself, and $S$ is
  the blow-up of $\Ptwo$ in a $k$-rational point.

  For the cases of degree $\le 7$, we note that their extended Dynkin diagrams
  (which can be found in \cite[Section~6 and 8]{MR89f:11083}, for example)
  have no symmetry, so that all their negative curves are defined over
  $k$. Therefore, these surfaces are obtained from $\Ptwo$ by a series of
  blow-ups of $k$-rational points.

  Finally, let $S$ be a form of $\FF_2$ containing a $k$-rational point
  $p$. If $p$ does not lie on the unique $(-2)$-curve $B$ in $S$, then blowing
  up $p$ gives a surface $S'$ of degree $7$ and type $\Aone$. So $S$ is
  obtained from $S'$ by contracting a certain $(-1)$-curve. As $S'$ is split
  and unique up to $k$-isomorphism, the same is true for $S$, which is
  therefore $k$-isomorphic to $\FF_2$. If $p$ does lie on $B$ in $S$, then the
  fibre $F$ through $p$ is uniquely determined and hence defined over
  $k$. Therefore $F$ is isomorphic to $\Pone$ over $k$, and so contains a
  $k$-rational point not lying on $B$.
\end{proof}

To complete the proof of one direction of our theorem, it remains to exhibit
the structure of a $\Ga^2$-variety in the following cases of generalised del
Pezzo surfaces $S$ defined over $k$:
\begin{itemize}
\item A form of $\Bl_2\Ptwo$: Contracting the two (possibly conjugate)
  non-intersecting $(-1)$-curves gives a form $S'$ of $\Ptwo$ with a line (the
  image of the third $(-1)$-curve on $S$) defined over $k$, so that $S'$ is
  split.  We equip it with a $\Ga^2$-structure fixing the line. Therefore, $S$
  is the blow-up of $\Ptwo$ in a collection of two (possibly conjugate) points
  on a line fixed by the $\Ga^2$-action, which is a $\Ga^2$-variety over $k$
  by Lemma~\ref{lem:blow_up}.
\item A form of $\Pone \times \Pone$ with a $k$-rational point $p$: Blowing up
  $p$ gives a form $S'$ of $\Bl_2\Ptwo$. As above, the surface $S'$ is a
  $\Ga^2$-variety over $k$, and, by Lemma~\ref{lem:blow_down}, the same is
  true for $S$.
\item A form of the degree $6$ surface of type $\Aone$: We argue as in the
  case $\Bl_2\Ptwo$, and see that this surface is the blow-up of $\Ptwo$ at
  three (possibly conjugate) points on a line defined over $k$, so is a
  $\Ga^2$-variety over $k$.
\item A form of the degree $6$ surface of type $\Atwo$: Contracting the two
  (possibly conjugate) $(-1)$-curves on $S$ gives a form $S'$ of $\FF_2$ with
  two (possibly conjugate) points on the same fibre $F$; this fibre is defined
  over $k$. Arguing as in the proof of Lemma~\ref{lem:always_split}, $S'$ is
  split. It suffices to show that there exists a $\Ga^2$-structure on $S'$
  over $k$ which fixes $F$ pointwise, since then we can then apply
  Lemma~\ref{lem:blow_up} to get the required action on $S$.

  Such a $\Ga^2$-structure can be found by blowing up a $k$-point on $F$
  outside the unique (-2)-curve $B$. This gives a surface of degree $7$ and
  type $\Aone$ with an exceptional curve $E$ defined over $k$. We equip this
  surface with the structure of a $\Ga^2$-variety over $k$ induced from the
  first action on $\Ptwo$ described in Lemma~\ref{lem:P2}. Here the strict
  transform $\widetilde{F}$ of $F$ is equal to the strict transform of the
  line fixed pointwise in $\Ptwo$, thus $F$ is also fixed pointwise and we get
  the required action on $S'$.
\end{itemize}

Finally, we must show that the remaining del Pezzo surfaces given in
Figure~\ref{fig:blow-ups} are \emph{not} equivariant compactifications of
$\Ga^2$.

\begin{lemma}\label{lem:not_Ga2_variety}
  The following del Pezzo surfaces are not equivariant compactifications of
  $\Ga^2$:
  \begin{itemize}
  \item forms of $\Ptwo$, $\Pone \times \Pone$ and $\FF_2$ without
    $k$-rational points,
  \item type $\Esix$ of degree $3$,
  \item type $\Dfour$ and degree $4$.
  \end{itemize}
\end{lemma}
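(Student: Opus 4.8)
The plan is to dispatch the three bullet points by quite different arguments. For the forms of $\Ptwo$, $\Pone\times\Pone$ and $\FF_2$ without a $k$-rational point, I would argue that a $\Ga^2$-structure forces the existence of a $k$-point: the open orbit is equivariantly isomorphic to $\Ga^2$ over $k$, and $\Ga^2(k)$ is certainly nonempty (it contains the identity element), so its image under the open immersion $\Ga^2\inj S$ is a $k$-rational point of $S$. Hence any $\Ga^2$-variety has a $k$-rational point, which immediately rules out all three pointless forms. (For forms of $\Ptwo$ this is the classical Severi–Brauer obstruction, but the $\Ga^2$ argument is uniform.)

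For the two ``borderline'' cases — type $\Esix$ in degree $3$ and type $\Dfour$ in degree $4$ — the strategy of Section~\ref{sec:strategy} applies. By Lemma~\ref{lem:singular} it suffices to work with the minimal desingularisation $\tS$, and since both surfaces are unique over $\kbar$ (and, being split by the rigidity argument already in the text, it suffices to treat the split case), I would run the algorithm of diagram~\eqref{eq:blow-ups}: pick an explicit anticanonical model, choose a projection $\pi_2:\PP^d\rto\Ptwo$ so that one negative curve maps to a line $\ell=\{x_0=0\}$ and the remaining $r=9-d$ negative curves map to points on $\ell$, and then test whether the linear system $V\subset H^0(\Ptwo,\O_\Ptwo(3))$ defining $\phi:\Ptwo\rto S$ is invariant under the relevant $\Ga^2$-structures on $\Ptwo$ from Lemma~\ref{lem:P2}. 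The key point, already prepared in Section~\ref{sec:strategy}, is that for type $\Esix$ and type $\Dfour$ the negative curves above $\ell$ all map to a \emph{single} point $p_1$ (the configuration has a long chain of $(-2)$-curves meeting $\ell$ only at one place), so $\tS$ is the blow-up of just one point of $\Ptwo$ together with infinitely near points; this means we must rule out \emph{both} the $\tau$-class and the $\rho$-class. For the $\tau$-class, the reduction in Section~\ref{sec:strategy} shows it is enough to test invariance of $V$ under the standard $\tau$ itself, and a direct computation of the induced action on the cubics will show $V$ is not preserved. For the $\rho$-class, the crucial observation — again flagged in Section~\ref{sec:strategy} — is that for any choice of the conjugating isomorphism $j$, the matrix of $\rho'(\alpha(a,b))$ is lower-triangular with $1$'s on the diagonal and all sub-diagonal entries nonzero for general $(a,b)$; in particular the action on $\ell$ and on the exceptional configuration over $p_1$ moves the infinitely near points that must be blown up, contradicting the requirement (from Lemma~\ref{lem:negative_curves} and its proof) that every negative curve of $\tS$ be fixed. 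Hence neither class induces a $\Ga^2$-action on $S$.

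The main obstacle is the $\rho$-class analysis for $\Esix$ and $\Dfour$: unlike the $\tau$-class, one genuinely cannot reduce to a single structure, so one must extract from the general lower-triangular form of $\rho'(\alpha(a,b))$ a robust reason why no member of the family can fix the full chain of infinitely near points underlying the $(-2)$-curves. I expect this to come down to the statement that a nontrivial unipotent one-parameter subgroup acting on $\Ptwo$ with a unique fixed point on the invariant line $\ell$ cannot fix the length-$r$ infinitely near configuration over that point — equivalently, the second $(-2)$-curve in the chain (the one lying on the first exceptional divisor but away from $\ell$) is not invariant, because the sub-diagonal entries of $\rho'(\alpha(a,b))$ are nonzero. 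Once this is established uniformly, both remaining cases follow, and combined with the trivial $k$-point obstruction for the pointless forms, the lemma is proved.
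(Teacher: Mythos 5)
Your treatment of the pointless forms matches the paper's (the identity of $\Ga^2$ gives a $k$-point of the open orbit), and your overall plan for the two borderline cases follows the strategy of Section~\ref{sec:strategy}. However, there is a concrete error in your geometric setup for the $\Dfour$ case: you assert that for \emph{both} $\Esix$ and $\Dfour$ the negative curves lying over $\ell$ are contracted to a single point $p_1$, so that both the $\tau$-class and the $\rho$-class must be ruled out by computation. This is true for $\Esix$ but false for the $\Dfour$ quartic: there the negative curves on $\tS$ are mapped by $\pi_1$ to $\ell$ and \emph{two distinct} points $p_1,p_2$ on it. Your planned $\rho$-class analysis for $\Dfour$ therefore rests on a wrong premise; moreover you miss the much simpler argument that actually applies, namely that since $\rho$ (and anything equivalent to it) fixes only one point of its invariant line, a configuration requiring two fixed points on $\ell$ immediately excludes the entire $\rho$-class with no computation at all. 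Only the $\tau$-structure then needs to be tested for $\Dfour$.

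A second, softer gap concerns your proposed mechanism for killing the $\rho$-class in the $\Esix$ case. You want to argue that a nontrivial unipotent action on $\Ptwo$ with a unique fixed point on $\ell$ cannot fix the length-$r$ chain of infinitely near points. As a general principle this fails: a unipotent automorphism fixing a point acts unipotently on the tangent space there, hence fixes a direction, hence fixes a point on each successive exceptional divisor, so it \emph{can} preserve a chain of infinitely near points (indeed the $\Afour$ surface of degree $5$ is a $\Ga^2$-variety realised over a single point of $\ell$). The actual obstruction must be extracted from the specific linear series $V$ of cubics defining $\phi$: one checks directly, using only the fact that $\rho'(\alpha(a,b))$ is lower triangular with $1$'s on the diagonal and generically nonzero subdiagonal entries, that $V$ cannot be invariant under any such $\rho'$. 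Without replacing your infinitely-near-points heuristic by this invariance computation (or an equivalent argument), the $\Esix$ case is not complete.
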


\begin{proof}
  As any $\Ga^2$-variety over $k$ contains an open subset isomorphic to
  $\Ga^2$ over $k$, it must contain a $k$-rational point.

  For the remaining two surfaces, it is enough to work over $\kbar$.  To prove
  that a generalised del Pezzo surface $\tS$ is \emph{not} a $\Ga^2$-variety,
  we use the startegy and notation of Section~\ref{sec:strategy} again
  (cf. \cite[Remark~3.3]{MR2029868}).
\begin{itemize}
\item \emph{$\Esix$ of degree $3$:}
  We consider the anticanonical embedding $i: S \inj \PP^3$ defined by
  \begin{equation*}
    S : x_1x_0^2+x_0x_3^2+x_2^3=0,
  \end{equation*}
  and $\pi_2 : \xx \mapsto (x_0:x_2:x_3)$. Then $\phi$ is given
  by
  \begin{equation*}
    (x_0:x_2:x_3) \mapsto (x_0^3:-(x_0x_3^2+x_2^3):x_0^2x_2:x_0^2x_3).
  \end{equation*}

  Since $\pi_1$ maps one of the $(-2)$-curves on $\tS$ to $\ell = \{x_0 = 0\}$
  and all other negative curves to $p_1=(0:0:1)$, we must show that the linear
  series defining $\phi$ is neither invariant under the $\Ga^2$-action induced
  by $\tau$ nor under one of the actions described in
  Section~\ref{sec:strategy} that are equivalent to $\rho$.

  For the relevant actions $\rho'$ equivalent to $\rho$, it is straightforward
  to check (only using the facts about the lower triangular representations of
  $\rho'$ stated at the end of Section~\ref{sec:strategy}) that the linear
  series cannot be invariant.  For $\tau$, see \cite[Remark~3.3]{MR2029868}.
\item \emph{$\Dfour$ of degree $4$:} Similarly, assume that $S$ of type
  $\Dfour$ and degree $4$ is a $\Ga^2$-variety; see
  \cite[Lemma~2.1]{MR2290499} for its equation and geometric properties. By
  \cite[Lemma~2.2]{MR2290499}, the negative curves on its minimal
  desingularisation $\tS$ are mapped by $\pi_1$ to a line $\ell \subset \Ptwo$
  and two distinct points $p_1,p_2$ on it. As explained in
  Section~\ref{sec:strategy}, this rules out a $\Ga^2$-structure induced by a
  structure on $\Ptwo$ equivalent to $\rho$. Finally, see
  \cite[Lemma~2.3]{MR2290499} for a proof that $S$ does not have a
  $\Ga^2$-structure induced by $\tau$.
\end{itemize}
This completes the proof of the lemma.
\end{proof}

Finally, we note that if the generalised del Pezzo surfaces of type
$\Eseven$ of degree $2$ or type $\Eeight$ of degree $1$ were
$\Ga^2$-varieties, the same would hold for type $\Esix$ of degree
$3$ (by contracting $(-1)$-curves, see Lemma~\ref{lem:blow_down}),
contradicting Lemma~\ref{lem:not_Ga2_variety}.

Thus we have shown that the list given in the statement of our
theorem is complete.

\section{An equivariant compactification of $\Ga \rtimes
  \Gm$}\label{sec:ga_gm}

Let $S$ be the singular quartic del Pezzo surface of type
$\Athree+\Aone$ defined by
\begin{equation*}
  S : x_0^2+x_0x_3+x_2x_4=x_1x_3-x_2^2=0.
\end{equation*}
In this section, we show that this is an example of a del Pezzo
surface that is an equivariant compactification of a semidirect
product of $\Ga$ and $\Gm$, but is neither toric nor a
$\Ga^2$-variety. Manin's conjecture has been proved for this surface
in \cite[Section~8]{MR2520770}, not by exploiting this
structure, but using the universal torsor method.

\medskip

The singularities on $S$ are $(0:0:0:0:1)$ of type $\Athree$ and
$(0:1:0:0:0)$ of type $\Aone$. It contains three lines
$\{x_0=x_1=x_2=0\}$, $\{x_0+x_3=x_1=x_2=0\}$, $\{x_0=x_2=x_3=0\}$.

The projection $\xx \mapsto (x_0:x_1:x_2)$ from the first line is a
birational map $S \rto \Ptwo$, with inverse $\Ptwo \rto S$ defined
by
\begin{equation*}
  (y_0:y_1:y_2) \mapsto (y_0y_1y_2 : y_1^2y_2 : y_1y_2^2 : y_2^3 :
  -y_0(y_2^2+y_0y_1)).
\end{equation*}
These birational maps induce isomorphisms between the complement $U$
of the lines on $S$ and $U'=\{y_1y_2 \ne 0\} \subset \Ptwo$.

\medskip

Let $\Ga \rtimes \Gm$ be the semidirect product of $\Ga$ and $\Gm$
via $\phi: \Gm \to \Aut(\Ga)$ defined by $\phi_t(b)=t^{-1}b$ for $t
\in \Gm$ and $b \in \Ga$.

The action of $(b,t) \in \Ga \rtimes \Gm$ on $S$ is given by the
representation
\begin{equation*}
  \begin{pmatrix}
    1   & 0   & bt    & 0  & 0      \\
    0   & t^2 & 0     & 0  & 0      \\
    0   & 0   & t     & 0  & 0      \\
    0   & 0   & 0     & 1  & 0      \\
    -2b & 0   & -tb^2 & -b & t^{-1}
  \end{pmatrix}.
\end{equation*}

Its only fixed points are the singularities (so there is no hope to
produce from this example a singular cubic surface that is an
equivariant compactification of $\Ga \rtimes \Gm$).

The $\Ga \rtimes \Gm$-action on $S$ described above is induced by
the action on $\Ptwo$ defined by
\begin{equation*}
  \begin{pmatrix}
    t^{-1} & 0  & b \\
    0      & t  & 0 \\
    0      & 0  & 1
  \end{pmatrix}.
\end{equation*}

The open orbit under the $\Ga \rtimes \Gm$-action is the complement
$U$ of the lines on $S$ (resp. $U' \subset \Ptwo$).

\bibliographystyle{alpha}

\bibliography{compact}

\end{document}